\newcommand{\ubar}{\underline}
\def\dfrac{\displaystyle\frac}
\newtheorem{prop}{Proposition}
\newtheorem{theo}[prop]{Theorem}
\newtheorem{lemm}[prop]{Lemma}
\newtheorem{coro}[prop]{Corollary}
\newtheorem{rmk}[prop]{Remark}
\newtheorem{defi}[prop]{Definition}
\newcommand{\be}{\begin{equation}}
\newcommand{\ee}{\end{equation}}
\newcommand{\lt}{\left}
\newcommand{\rt}{\right}
\newcommand{\goto}{\rightarrow}
\newcommand{\bn}{\bar{\nabla}}
\newcommand{\al}{\alpha}
\renewcommand{\leq}{\leqslant}
\renewcommand{\geq}{\geqslant}
\newcommand{\td}{\tilde}
\newcommand{\R}{\mathbb{R}}
\newcommand{\M}{\mathcal{M}}
\newcommand{\ka}{\kappa}
\newcommand{\s}{\sigma}
\newcommand{\ga}{\gamma}
\newcommand{\p}{\partial}
\newcommand{\lu}{\ubar{u}}
\newcommand{\uu}{\bar{u}}
\newcommand{\gas}{\gamma^*}
\newcommand{\tbus}{\tilde{\bar{u}}^*}
\newcommand{\tlus}{\tilde{\ubar{u}}^*}
\newcommand{\T}{\mathcal{T}}
\numberwithin{equation}{section}
\begin{document}
\setlength{\baselineskip}{1.2\baselineskip}

\title[Entire $\sigma_k$ curvature flow]
{Entire $\sigma_k$ curvature flow in Minkowski space}

\author{Zhizhang Wang}
\address{School of Mathematical Science, Fudan University, Shanghai, China}
\email{zzwang@fudan.edu.cn}
\author{Ling Xiao}
\address{Department of Mathematics, University of Connecticut,
Storrs, Connecticut 06269}
\email{ling.2.xiao@uconn.edu}
\thanks{2010 Mathematics Subject Classification. Primary 53C42; Secondary 35J60, 49Q10, 53C50.}
\thanks{Research of the first author is  sponsored by Natural Science  Foundation of Shanghai, No.20JC1412400,  20ZR1406600 and supported by NSFC Grants No.11871161, 12141105}

\begin{abstract}
In this paper, we study the $\sigma_k$ curvature flow of noncompact spacelike hypersurfaces in Minkowski space. We prove that if the initial hypersurface satisfies certain conditions, then the flow exists for all time. Moreover, we show that after rescaling, the flow converges to a self-expander.
\end{abstract}

\maketitle

\section{Introduction}
\label{int}
Let $\R^{n, 1}$ be the Minkowski space with the Lorentzian metric
\[ds^2=\sum\limits_{i=1}^ndx_i^2-dx_{n+1}^2.\]
In this paper, we study the $\sigma_k$ curvature flows of noncompact spacelike hypersurfaces in Minkowski space.
Spacelike hypersurfaces $\M\subset\R^{n, 1}$ have an everywhere timelike normal
field, which we assume to be future directed and to satisfy the condition
$\lt<\nu, \nu\rt> =-1.$ Such hypersurfaces can be locally expressed as the graph of a function
$u: \R^n\rightarrow\R$ satisfying $|Du(x)|< 1$ for all $x\in\R^n.$

Given an entire spacelike hypersurface $\M_0$ embedded in $\R^{n, 1},$ we let
\[X_0: \R^n\goto\R^{n, 1}\] be an embedding with $X_0(\R^n)=\M_0. $ For a given $\al\geq 1,$ we say
a family of spacelike embeddings is a solution of the $\sigma_k^{\al/k}$ curvature flow, if for each $t>0,$
$X(\R^n, t)=\M_t$ is an entire spacelike hypersurface embedded in $\R^{n, 1},$ and $X(\cdot, t)$ satisfies
\be\label{int1}
\begin{aligned}
\frac{\p X(p, t)}{\p t}&=F^\al(\kappa[\M_t](p, t))\nu\\
X(\cdot, 0)&=\M_0,
\end{aligned}
\ee
where $F^\al(\kappa[\M_t](p, t))=\sigma_k^{\al/k}(\kappa[\M_t](p, t))$ is the $\sigma_k^{\al/k}$ curvature of $\M_t$ at $X(p, t),$ $\nu$ is the future directed unit normal vector
of $\M_t$ at $X(p, t),$ and $\sigma_k$ is the $k$-th elementary symmetric polynomial, i.e.,
\[\sigma_k(\kappa)=\sum\limits_{1\leq i_1<\cdots<i_k\leq n}\kappa_{i_1}\cdots\kappa_{i_k}.\]
Since the embeddings $X(\cdot, t)$ are spacelike, the position vector of $X(\cdot, t)$ can be written as $\M_t=\{(x, u(x, t))\mid x\in\R^n\}.$
In particular, we assume $\M_0=\{(x, u_0(x))\mid x\in\R^n\}.$
After reparametrization, we can rewrite \eqref{int1} as following equation
\be\label{flow}
\left\{
\begin{aligned}
\frac{\p u}{\p t}&=F^\al\lt(\frac{1}{w}\gamma^{ik}u_{kl}\gamma^{lj}\rt)w,\\
u(x, 0)&=u_0(x),\\
\end{aligned}
\right.
\ee
where $w=\sqrt{1-|Du|^2},$ $\gamma^{ik}=\delta_{ik}+\frac{u_iu_k}{w(1+w)},$ and $u_{kl}=D^2_{x_kx_l}u$ is the ordinary Hessian of $u.$

The curvature flow problem in Euclidean space has been extensively studied in the literature. For the mean curvature flow, Huisken \cite{Hui} proved that if the initial hypersurface $\M_0$ is smooth, closed, and strictly convex, then the mean flow exists on a finite time interval $0\leq t\leq T,$ and the $\M_t$ converges to a point as $t\goto T.$ Moreover, by a suitable rescaling, it is shown that the normalized hypersurfaces converge to a sphere. A similar result has been obtained by Chow \cite{Chow} for the $n$-th root of the Gauss curvature flow.
In \cite{And}, Andrews generalized Huisken's result via the Gauss map to a large family of curvature flows including the $k$-th root of the $\sigma_k$ curvature flow.

Recall that the hyperboloid in Minkowski space is the analogue of the sphere in Euclidean space. Natural questions to ask are:
\begin{enumerate}
\item[a).] Let $\M_0$ be an entire, spacelike, strictly convex hypersurface in $\R^{n, 1},$ does there exists a solution to \eqref{int1}?
\item[b).]If the answer to part a) is ``yes'', then after rescaling, does $\M_t$ converge to a hyperboloid?
\end{enumerate}

 These questions were investigated by Andrews-Chen-Fang-McCoy in \cite{ACFMc}. Under the assumption that the initial hypersurface $\M_0$ is spacelike, co-compact, and strictly convex, Andrews-Chen-Fang-McCoy have given affirmative answers to questions a) and b). Note that we say a hypersurface is co-compact, if it is invariant under a discrete group of ambient isometries and the quotient space with respect to this group is compact. Such hypersurfaces only constitute a small collection of spacelike and strictly convex hypersurfaces. Moreover, with this assumption, the standard maximum principle can be applied directly to \eqref{int1} without worrying about the infinity (i.e., when establish a priori estimates, $\M_t$ can be treated as compact hypersurfaces without boundary). In \cite{WX212}, for $k=n$ and $\al>0,$ the authors were able to give affirmative answeres to quations a) and b) with much weaker assumptions on $\M_0$ (for details see Section 1 of \cite{WX212}).

In this paper, we assume the initial hypersurface $\M_0$ satisfying \textbf{Condition A:}
\begin{enumerate}
\item[(1).] spacelike,
\item[(2).] strictly convex,
\item[(3).] $u_0(x)-|x|\goto\varphi(x/|x|)>0$ as $|x|\goto \infty,$
\item[(4).] there exists constant $c_0, C>0$ such that
\[\tag{$\clubsuit$} c_0<\sigma_k(\kappa[\M_0])^{\al/k}< -C\lt<X, \nu\rt>\label{Cond3}.\]
\end{enumerate}
By theorem 1 in \cite{RWX20} we know there is a large collection of hypersurfaces satisfying \textbf{Condition A}. Therefore, \textbf{Condition A} is not a strong assumption.

We consider the long time existence and convergence of equation \eqref{int1} with initial hypersurfaces $\M_0$ satisfying \textbf{Condition A.}
Unlike curvature flows in Euclidean space (see \cite{Hui, Chow, And} and references therein), we show that in general the rescaling of $\M_t$ does not converge to the hyperboloid.

We want to explain \textbf{Condition A} a little bit more. It is easy to see that for any strictly convex spacelike hypersurface $\M=\{(x, u(x))\mid x\in R^n\}$ with Gauss image equals unit ball $\bar{B}_1,$ we can move $\M$ vertically such that (3) is satisfied. Condition (4) can be viewed as a growth condition on the $\sigma_k$ curvature of $\M_0,$ i.e., $\sigma_k(\kappa[\M_0])$ cannot grow too fast as $|x|\goto\infty.$ We want to point out that hypersurfaces with $\sigma_k$ curvature bounded from above and below satisfy condition (4). Assumptions on the curvature of the initial hypersurface are often needed in proving the long-time existence and convergence of non-compact curvature flows (see \cite{Aa, ACFMc, BS09, DH} for example).

Before we state our main result, we need the definition of self-expander.
\begin{defi}
A $k$-convex hypersurface $\M_u$ is called a self-expander of the flow \eqref{int1}, if it satisfies the equation
$$\sigma_k^{\frac{\al}{k}}(\kappa[\M_u])=-\lt<X,\nu\rt>.$$
Here $X=(x,u(x))$ is the position vector of $\M_u$ and $\nu$ is the future time like unit normal of $\M_u$.
 \end{defi}
 It is clear, the hyperboloid is a self-expander. Moreover, in \cite{WX22-1}, the authors have proved that for an arbitrary $\varphi\in C^2(\mathbb{S}^{n-1})$, $\varphi>0$, there exists a unique self-expander $\M_u$ such that
 $$u(x)-|x|\goto\varphi\lt(\frac{x}{|x|}\rt),\text{ as }\ \ |x|\goto\infty.$$

This work is concerned with the long time existence and convergence of \eqref{int1} for initial data $\M_0$
satisfying \textbf{Conditions A}. In particular, we prove the following theorem.
\begin{theo}\label{theo1}
Suppose $\varphi\in C^2(\mathbb{S}^{n-1}),$ $\varphi >0,$ and the initial spacelike hypersurface $\M_{u_0}$ satisfying \textbf{Conditions A}.
Then, the curvature flow \eqref{int1} admits a solution $\M_{u(x,t)}$ for all $t>0.$ Moreover, the rescaled flow
$$\td{X}=\lt(A(t)x, \frac{1}{A(t)}u(A(t)x, t)\rt), A(t)=[(1+\al)t+1]^{\frac{1}{1+\al}}$$
converges to a self-expander  $\M_{u^{\infty}}$ with the asymptotic behavior
 $$u^{\infty}(x)-|x|\goto\varphi\lt(\frac{x}{|x|}\rt),\text{ as }\ \ |x|\goto\infty.$$
\end{theo}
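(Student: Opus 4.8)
The plan is to get long-time existence from uniform a priori estimates plus a continuation argument, and then to pass to the parabolically rescaled flow, whose stationary solutions are precisely the self-expanders, and identify the limit with the unique self-expander having asymptotic datum $\varphi$.

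\emph{Long-time existence.} Rewriting \eqref{int1} as the scalar equation \eqref{flow}, which is parabolic on the open set of spacelike ($|Du|<1$) $k$-convex graphs, short-time existence holds; on $\R^n$ one obtains it by solving \eqref{flow} on an exhaustion by balls with boundary data trapped between barriers and letting the radius $\to\infty$. The flow then continues for all $t>0$ once one has, on every compact set and uniformly in $t$ (with the natural growth in $t$): a $C^0$ bound; a local gradient bound $w=\sqrt{1-|Du|^2}\geq\delta>0$; two-sided bounds on the principal curvatures, so that strict convexity is preserved and $\sigma_k^{\al/k}$ stays bounded above; and finally interior $C^{2,\beta}$ and $C^\infty$ bounds via parabolic Evans--Krylov and Schauder theory (using the concavity of the operator in the Gauss-map/Legendre formulation, or an Evans--Krylov variant suited to $\sigma_k^{\al/k}$). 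Given all of these, the maximal existence time is infinite.

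\emph{The estimates.} Every barrier is built from a self-expander: by \cite{WX22-1} each positive $C^2$ function on $\mathbb{S}^{n-1}$ is the asymptotic datum of a self-expander $\M_v$, and each $\M_v$ generates an exact self-similar solution $B(t)\M_v$ of \eqref{int1} with $B^{\al}B'\equiv 1$ --- this is the algebraic origin of $A(t)=[(1+\al)t+1]^{1/(1+\al)}$. Using $\M_{u^\infty}$, condition (3), the fact that spacelike convex graphs with these asymptotics lie strictly above the light cone, and the monotonicity $\frac{d}{d\lambda}(\lambda u^\infty(x/\lambda))>0$ that follows from the self-expander equation, one checks $\M_0$ is trapped between $\lambda_1\M_{u^\infty}$ and $\lambda_2\M_{u^\infty}$ for suitable $0<\lambda_1\leq 1\leq\lambda_2$; by the avoidance principle $\M_t$ stays trapped between the two self-similar solutions they generate, giving the $C^0$ bound, keeping $\M_t$ uniformly spacelike on compact sets, and --- with the evolution of $-\langle X,\nu\rangle=(u-x\cdot Du)/w$ --- the local gradient bound. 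For the curvature bound I would run the maximum principle on the largest principal curvature (and on $\kappa_{\min}$ to preserve strict convexity), exploiting that the scaling-invariant ratio $\sigma_k^{\al/k}(\kappa)/(-\langle X,\nu\rangle)$ --- equal to $1$ on self-expanders, constant on each self-similar barrier --- obeys a maximum principle; condition $(\clubsuit)$, $c_0<\sigma_k^{\al/k}(\kappa[\M_0])<-C\langle X,\nu\rangle$, is precisely what propagates $\sigma_k^{\al/k}\lesssim -\langle X,\nu\rangle$ and so bounds $\sigma_k^{\al/k}$ from above, which together with convexity bounds every $\kappa_i$. The degeneracy near the light cone ($w\to0$, curvatures $\to\infty$) is absorbed by the barrier comparison. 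I regard this curvature step as the heart of the argument.

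\emph{Rescaling and convergence.} The rescaled flow $\td X(x,t)=A(t)^{-1}X(A(t)x,t)$, equivalently the dilation $\td{\M}_\tau=A(t)^{-1}\M_t$ with $\tau=\log A(t)$, satisfies the autonomous flow $\partial_\tau\td X=\big(\sigma_k^{\al/k}(\kappa[\td\M_\tau])+\langle\td X,\td\nu\rangle\big)\td\nu$, whose stationary solutions are exactly the self-expanders; since $1/(1+\al)$ is the parabolic exponent, the estimates above --- carried out in this picture, where the barriers confine $\td{\M}_\tau$ to a bounded neighbourhood of $\M_{u^\infty}$ --- become uniform in $\tau$. Hence along any $\tau_j\to\infty$ a subsequence of $\td{\M}_{\tau_j}$ converges in $C^\infty_{\mathrm{loc}}$. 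After rescaling, the two self-similar barriers become $\big(B_i(t)/A(t)\big)\M_{u^\infty}$ with $B_i(t)^{1+\al}=(1+\al)t+\lambda_i^{1+\al}$, so $B_i(t)/A(t)\to1$ and both converge to $\M_{u^\infty}$; trapping $\td{\M}_\tau$ between them forces $\td{\M}_\tau\to\M_{u^\infty}$ in $C^0_{\mathrm{loc}}$, and interpolating with the uniform higher-order bounds promotes this to $C^\infty_{\mathrm{loc}}$. As $\M_{u^\infty}$ is, by construction, the self-expander with $u^\infty(x)-|x|\to\varphi(x/|x|)$, the theorem follows. (Alternatively, both the convergence and the fact that the limit is a self-expander can be read off a monotonicity formula for the defect $Q:=\sigma_k^{\al/k}(\kappa[\td\M_\tau])+\langle\td X,\td\nu\rangle$, which solves a linear parabolic equation and is driven to $0$.) Beyond the curvature estimates, the delicate point is making sure no asymptotic data escapes to infinity under the rescaled flow; this is exactly what the barrier sandwich --- hence $(\clubsuit)$ together with Condition A --- rules out, and it is what pins the limiting self-expander to $\varphi$.
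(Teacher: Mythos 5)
Your outline captures the right skeleton --- the scaling $A(t)$, self-expanders as generators of exactly self-similar barriers, the rescaled autonomous flow whose equilibria are self-expanders, and a sandwich argument --- but two of the steps, as written, would fail, and the paper takes a substantially different technical route from the one you propose.

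The central gap is the curvature bound. You claim that propagating $\sigma_k^{\al/k}\lesssim -\lt<X,\nu\rt>$ from $(\clubsuit)$, ``together with convexity, bounds every $\kappa_i$.'' This is false for $1<k<n$: take $n=3$, $k=2$, $\kappa=(M,\,c/(2M),\,c/(2M))$; then $\sigma_2\to c$ and all $\kappa_i>0$ while $\kappa_1=M\to\infty$. Two-sided bounds on $\sigma_k$ together with strict convexity give \emph{no} bound on $\kappa_{\max}$ when $1<k<n$. This is precisely why the paper's Lemma \ref{loc-c2-lem} is a separate Pogorelov-type interior $C^2$ estimate, built around the test function $(c-u)^m\log P_m/\bigl(1-m\Phi/M\bigr)$ with $P_m=\sum_i\kappa_i^m$, and it is the technical heart of the long-time existence. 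Your sketch supplies no substitute for that argument; the $F$-bound alone does not close the $C^2$ estimate.

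The second gap is where the comparison arguments live. You run an avoidance principle and maximum principles directly on non-compact spacelike graphs over $\R^n$. Neither is automatic in the non-compact setting --- one must show the relevant extrema are attained or escape to infinity is impossible --- and this is precisely the control the barrier construction is supposed to produce, so there is a circularity. The paper dissolves it by passing to the Legendre transform: the flow lives on $B_1$ and all estimates are proved on the compactly contained approximate problems \eqref{sap1.1}, \eqref{sap3.1} on $\bar B_r\times[0,T]$, $r<1$, where the ordinary parabolic maximum principle applies with a genuine Dirichlet boundary; the dual formulation also makes the operator concave, enabling Evans--Krylov. A smaller discrepancy: the quantity the paper actually propagates (Lemma \ref{sap-lem3.1}) is $\Phi/v$ with $v=-\lt<\nu,e_{n+1}\rt>$, whose evolution has no zeroth-order term, not $\Phi/\bigl(-\lt<X,\nu\rt>\bigr)$; your ratio does satisfy $\mathfrak L\bigl(\Phi/\rho\bigr)=-(1+\al)(\Phi/\rho)^2+\text{gradient terms}$ with $\rho=-\lt<X,\nu\rt>$ and so also gives a one-sided propagation from $(\clubsuit)$, but, per the previous paragraph, on its own neither ratio yields the needed $C^2$ bound.
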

\begin{rmk}
In this remark, we will explain that $\varphi>0$ is only needed for technical reasons. In other words, without this assumption we can still prove the existence and convergence of the flow \eqref{int1}. When $u_0(x)-|x|\goto\varphi$ as $|x|\goto\infty$ and $\varphi$ is not positive
on $\mathbb S^{n-1},$ we can define
\[u_0^c:=u_0+c,\,\,\mbox{where $c>0$ such that $\varphi^c:=\varphi+c>0$ on $\mathbb S^{n-1}.$}\]
Applying Theorem \ref{theo1} we know there exists $u^c(x, t)$ satisfies
\[
\left\{
\begin{aligned}
\frac{\p u}{\p t}&=F^\al\lt(\frac{1}{w}\gamma^{ik}u_{kl}\gamma^{lj}\rt)w,\\
u(x, 0)&=u^c_0(x).\\
\end{aligned}
\right.
\]
Moreover, the rescaled flow $\lt(A(t)x, \frac{1}{A(t)}u^c(A(t)x, t)\rt)$ converges to a self-expander $u^{c\infty}(x)$ as $t\goto\infty,$ and
\[u^{c\infty}(x)-|x|\goto\varphi^c\lt(\frac{x}{|x|}\rt).\]
Let $u(x, t):=u^c(x, t)-c,$ then it is clear that $u(x, t)$ satisfies \eqref{flow} and
$\lt(A(t)x, \frac{1}{A(t)}\lt(u(A(t)x, t)+c\rt)\rt)$ converges to a self-expander $u^{c\infty}(x)$ as $t\goto\infty.$ In other words, when $\varphi$ is not positive,
the curvature flow \eqref{int1} still admits a solution $\M_{u(x,t)}$ for all $t>0.$ Moreover, after moving vertically, the rescaled flow still converges to a self-expander.
\end{rmk}

The organization of this paper is as follows.  In Section \ref{sap} we prove the long time existence of the approximate problem \eqref{sap1.1}.
Local $C^1$ and $C^2$ estimates are established in Section \ref{le}. Combining Section \ref{sap} with Section \ref{le}, we prove the existence
of solution to \eqref{int1} for all time $t>0$. In Section \ref{conv}, we show that after rescaling the solution of \eqref{int1}
converges to a self-expander as $t\goto\infty$.

\section{Solvability of the approximate problem}
\label{Sap}
In the rest of this paper, the constant  $C$ in \eqref{Cond3} is assumed to be $1,$ namely we assume
\begin{eqnarray}\label{Cond}
c_0<\sigma_k^{\frac{\al}{k}}(\ka[\M_{u_0}])< -\lt<X_{u_0}, \nu_{u_0}\rt>.
\end{eqnarray}
The equivalence of \eqref{Cond3} and \eqref{Cond} will be explained in the Appendix. We will also always denote $A(t)=[(1+\al)t+1]^{\frac{1}{1+\al}}.$

Applying Theorem 1 and 3 in \cite{WX22-1}, we know there exists a locally strictly convex function $\lu(x)$ such that
\[\s_k^{\frac{\al}{k}}(\ka[\M_{\lu}])=-\lt<X_{\lu}, \nu_{\lu}\rt>\,\,\mbox{and $\lu(x)-|x|\goto\varphi\lt(\frac{x}{|x|}\rt)$ as $|x|\goto\infty.$}\]
A straightforward calculation yields $A(t)\lu\lt(\frac{x}{A(t)}\rt)$ satisfying the flow equation \eqref{flow}.
Moreover, $A(t)\lu\lt(\frac{x}{A(t)}\rt)-|x|\goto A(t)\varphi\lt(\frac{x}{|x|}\rt)$ as $|x|\goto\infty.$
This inspires us to look for a locally strictly convex solution $u(x, t)$ of the initial value problem \eqref{int1}
satisfying $u(x, t)-|x|\goto A(t)\varphi\lt(\frac{x}{|x|}\rt)$ as $|x|\goto\infty.$ If such solution $u(x, t)$ exists, then by \cite{CT90}
 we obtain that its Legendre transform $u^*$ is defined in the unit ball $ \bar{B}_1\subset\R^n.$ Furthermore, by Lemma 14 of \cite{WX20} we know that $u^*$ satisfies $u^*(\xi, t)=-A(t)\varphi(\xi)$ for $\xi\in\mathbb S^{n-1}.$

In the following, we will always use $u^*$ to denote the Legendre transform of $u$ and we will also denote $F_*=\lt(\frac{\s_n}{\s_{n-k}}\rt)^\frac{1}{k}.$ In this paper, we will consider the solvability of the following problem:
\be\label{int1.1}
\lt\{
\begin{aligned}
u^*_t&=-F_*^{-\al}(w^*\gamma^*_{ik}u^*_{kl}\gamma^*_{lj})w^*\,\,&\mbox{in $B_1\times(0, \infty)$}\\
u^*(\cdot, t)&=[(1+\al)\tilde{t}]^{\frac{1}{1+\al}}\varphi^*\,\,&\mbox{on $\partial B_1\times[0, \infty)$}\\
u^*(\cdot, 0)&=u_0^*\,\,&\mbox{on $B_1\times\{0\},$}
\end{aligned}
\rt.
\ee
where $u_0^*$ is the Legendre transform of $u_0,$ $\varphi^*(\xi)=-\varphi(\xi)$ for any $\xi\in\p B_1,$ $\td{t}=t+(1+\al)^{-1},$
$w^*=\sqrt{1-|\xi|^2},$ $\gas_{ik}=\delta_{ik}-\frac{\xi_i\xi_k}{1+w^*},$ and $u^*_{kl}=\frac{\p^2u^*}{\p\xi_k\p\xi_l}.$
It is easy to see that if $u^*(\xi, t)$ solves \eqref{int1.1}, then its Legendre transform $u(x, t)$ is the unique solution of
the initial value problem \eqref{int1}.

\label{sap}
Due to the degeneracy of equation \eqref{int1.1}, in this section, we study the solvability of the approximate problem
\be\label{sap1.1}
\left\{
\begin{aligned}
(u^*_r)_t&=-F_*^{-\al}(w^*\gas_{ik}u^*_{kl}\gas_{lj})w^*\,\,&\mbox{in $B_r\times(0, T]$},\\
u^*_r(\cdot, t)&=[(1+\al)\tilde{t}]^{\frac{1}{1+\al}}u_0^*\,\, &\mbox{on $\p B_r\times[0, T],$}\\
u^*_r(\cdot, 0)&=u^*_0\,\,&\mbox{on $B_r\times\{0\}.$}
\end{aligned}
\right.
\ee

\subsection{$C^0$ estimates for $u^*_r$}
\label{sap2}
We first construct the supersolution of \eqref{flow}.
Let $\uu^*$ is the Legendre transform of $u_0.$ By the assumption \eqref{Cond}, we can see
\[F_*^{-\al}\lt(w^*\gas_{ik}\bar{u}^{*}_{kl}\gas_{lj}\rt)=\sigma_k^{\frac{\al}{k}}(\kappa[\M_{u_0}](x))<\frac{-\bar{u}^{*}}{w^*}. \]

Consider $\tbus=\lt[(1+\al)\tilde{t}\rt]^{\frac{1}{1+\al}}\bar{u}^{*},$  we want to point out that $\tbus$ is the Legendre transform of $A(t)u_0\lt(\frac{x}{A(t)}\rt).$
Then we get
\be\label{sap2.1}
\begin{aligned}
&\tbus=[(1+\al)\tilde{t}]^{-\frac{\al}{1+\al}}\bar{u}^{*}\\
&<-[(1+\al)\tilde{t}]^{-\frac{\al}{1+\al}}w^*F_*^{-\al}(w^*\gas_{ik}\bar{u}^{*}_{kl}\gas_{lj})\\
&=-F^{-\al}_*(w^*\gas_{ik}\tbus_{kl}\gas_{lj})w^*.
\end{aligned}
\ee
Therefore, by the standard maximum principle we obtain that $\tbus\leq u_r^*.$

Similar we can show
$\tlus=\lt[(1+\al)\tilde{t}\rt]^{\frac{1}{1+\al}}\lu^{*}$ is a supersolution to \eqref{sap1.1}
and $u^*_r<\tlus.$
Here $\M_{\lu}=\{(x, \lu(x))\mid x\in\R^n\}$ is a strictly convex spacelike hypersurface satisfying
\begin{eqnarray}
\sigma_k^{\frac{\al}{k}}(\kappa[\M_{\lu}](x))=-\lt<X_{\lu}, \nu_{\lu}\rt>
\end{eqnarray}
with  $\lu(x)-|x|\goto\varphi\lt(\frac{x}{|x|}\rt)$ as $|x|\goto\infty,$ and $\lu^*$ is the Legendre transform of $\lu.$ Moreover, $\tlus$ is the Legendre transform of $A(t)\lu\lt(\frac{x}{A(t)}\rt).$

We conclude
\begin{lemm}
\label{c0-lem}
Let $u_r^*$ be a solution of \eqref{sap1.1}, then $u_r^*$ satisfies
\[\td{\bar{u}}^*\leq u_r^*<\td{\lu}^*,\]
which implies
\[-C_1[(1+\al)\td{t}]^{\frac{1}{1+\al}}<u_r^*<C_0[(1+\al)\td{t}]^{\frac{1}{1+\al}},\]
where $C_0,$ $C_1$ are positive constants depending on $\varphi^*$ and $\M_0.$
\end{lemm}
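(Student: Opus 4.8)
The plan is to combine the two one-sided barriers already constructed in the preceding paragraphs with the parabolic maximum principle, and then translate the resulting inequalities into explicit growth bounds. First I would record the two comparison functions: $\tbus=[(1+\al)\td t]^{1/(1+\al)}\uus$ and $\tlus=[(1+\al)\td t]^{1/(1+\al)}\lus$, where $\uus$ is the Legendre transform of $u_0$ and $\lus$ is the Legendre transform of the self-expander $\lu$ with asymptotics $\lu(x)-|x|\to\varphi(x/|x|)$. The computation \eqref{sap2.1} shows $\tbus$ is a subsolution of the interior equation in \eqref{sap1.1}, and the analogous computation (using $\sigma_k^{\al/k}(\ka[\M_{\lu}])=-\langle X_{\lu},\nu_{\lu}\rangle$, i.e. $F_*^{-\al}(w^*\gas_{ik}\lus_{kl}\gas_{lj})=-\lus/w^*$) shows $\tlus$ is a supersolution. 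On the parabolic boundary of $B_r\times(0,T]$ the functions agree with, or bound, $u_r^*$: at $t=0$ we have $\tbus=\uus=u_r^*=\lus\cdot 1$ only in the sense $\tbus(\cdot,0)=\uus\le \lus=\tlus(\cdot,0)$ after noting $\uus\le\lus$ pointwise (which follows since $u_0(x)-|x|\to\varphi$ and $\lu(x)-|x|\to\varphi$ forces the corresponding ordering of Legendre transforms; alternatively this ordering is built into \textbf{Condition A}), and on $\p B_r\times[0,T]$ the boundary datum $[(1+\al)\td t]^{1/(1+\al)}u_0^*$ lies between $\tbus$ and $\tlus$ by the same pointwise comparison.

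Next I would invoke the maximum principle for the (degenerate) parabolic operator. Since $F_*^{-\al}$ is a smooth, concave, monotone function of the Hessian on the positive cone and $u_r^*$ is a classical locally strictly convex solution on $\overline{B_r}\times[0,T]$, the difference $u_r^*-\tbus$ (resp. $\tlus-u_r^*$) satisfies a linear parabolic differential inequality with no zeroth-order term after the usual application of the mean value theorem to the fully nonlinear operator; hence it attains its minimum on the parabolic boundary, where it is nonnegative. This gives $\tbus\le u_r^*\le\tlus$ on all of $\overline{B_r}\times[0,T]$; the strictness $u_r^*<\tlus$ in the interior comes from the strong maximum principle together with the strict inequality in \eqref{Cond} propagating from $t=0$, exactly as the strict inequality $\tbus<\tlus$ on the boundary is used.

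Finally I would convert this into the stated bound. Since $\uus$ and $\lus$ are fixed continuous functions on the closed unit ball $\overline{B_1}$ (they are the Legendre transforms of $u_0$ and of the self-expander $\lu$, both defined on $\overline{B_1}$ by \cite{CT90}), they are bounded there: set $C_0:=\sup_{\overline{B_1}}(-\uus)^+$ wait — more carefully, $C_1:=\sup_{\overline{B_1}}|\uus|$ up to the correct sign, and $C_0:=\sup_{\overline{B_1}}|\lus|$, constants depending only on $\varphi^*$ and $\M_0$. Then $\tbus\le u_r^*<\tlus$ immediately yields $-C_1[(1+\al)\td t]^{1/(1+\al)}<u_r^*<C_0[(1+\al)\td t]^{1/(1+\al)}$, because the time-dependent prefactor is nonnegative. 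I expect the only genuine subtlety to be the boundary/initial ordering $\uus\le\lus$ and the corresponding ordering on $\p B_r$ — one must check that the asymptotics in \textbf{Condition A}(3) and the defining property of $\lu$ indeed force $u_0^*\le\lus$ on $\overline{B_1}$ (equivalently $u_0\ge\lu$, since Legendre transform reverses the pointwise order on graphs with the same Gauss image); everything else is a routine application of comparison principles for concave fully nonlinear parabolic equations.
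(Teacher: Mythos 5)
Your overall strategy — use $\tbus$ as a subsolution barrier, $\tlus$ as a supersolution barrier, apply the parabolic comparison principle, and then read off the explicit bounds from the boundedness of $\uus$ and $\lus$ on $\bar B_1$ — is exactly the paper's approach. Two remarks, one minor and one substantive.

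The minor point: $\uus$ \emph{is} $u_0^*$ (both are defined as the Legendre transform of $u_0$), so the boundary datum $[(1+\al)\td t]^{1/(1+\al)}u_0^*$ of \eqref{sap1.1} equals $\tbus$ exactly on $\p B_r\times[0,T]$, and $\tbus(\cdot,0)=u_0^*=u_r^*(\cdot,0)$ on $B_r$. No ordering at all is needed for the lower barrier; the comparison $\tbus\le u_r^*$ comes solely from the strict differential inequality \eqref{sap2.1} with matching parabolic boundary data.

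The substantive gap is your justification for $\uus\le\lus$. You write that this ``follows since $u_0(x)-|x|\to\varphi$ and $\lu(x)-|x|\to\varphi$ forces the corresponding ordering of Legendre transforms,'' and alternatively that the ordering is ``built into Condition A.'' Neither is correct: two convex spacelike graphs with the identical asymptotic $|x|+\varphi(x/|x|)$ need not be ordered (one can perturb $\lu$ by a small rapidly decaying bump and destroy the ordering while preserving the asymptotics), and Condition A contains no explicit ordering. What actually forces $u_0^*\le\lus$ is the \emph{elliptic} comparison principle applied to the self-expander operator $G(u^*):=F_*^{-\al}(w^*\gas_{ik}u^*_{kl}\gas_{lj})w^*+u^*$ on $B_1$. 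Condition A(4), rewritten via the Legendre transform, says $G(u_0^*)<0$, while $\lus$ satisfies $G(\lus)=0$; both equal $-\varphi$ on $\p B_1$ by the common asymptotics. Since $F_*^{-\al}$ is decreasing in the Hessian and the zeroth-order coefficient in $G$ is $+1$, at an interior positive maximum of $u_0^*-\lus$ one would have $D^2u_0^*\le D^2\lus$ and $u_0^*>\lus$, forcing $G(u_0^*)>G(\lus)=0$, a contradiction. This is the step the paper glosses with ``similar we can show,'' and it is the curvature inequality $(\clubsuit)$, not the asymptotics, that carries the load. Once $u_0^*\le\lus$ (with strict inequality in the interior, hence on $\p B_r$, by the strong maximum principle) is in hand, the parabolic comparison with the exact solution $\tlus$ gives $u_r^*<\tlus$ as you describe, and the rest of your argument goes through.
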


\subsection{The bounds for $F_*$}
\label{sap3}
In this subsection we will show that along the flow, $F_*$ is bounded from above and below.

We take the hyperplane $\mathbb{P}:=\{X=(x_1, \cdots, x_{n}, x_{n+1}) |\, x_{n+1}=1\}$ and consider the projection of
$\mathbb{H}^n(-1)$ from the origin into $\mathbb{P}.$ Then $\mathbb{H}^n(-1)$ is mapped in
a one-to-one fashion onto an open unit ball $B_1:=\{\xi\in\R^n |\, \sum\xi^2_k<1\}.$ The map
$P$ is given by
\[P: \mathbb{H}^n(-1)\rightarrow B_1;\,\,(x_1, \cdots, x_{n+1})\mapsto (\xi_1, \cdots, \xi_n),\]
where $x_{n+1}=\sqrt{1+x_1^2+\cdots+x_n^2},$ $\xi_i=\frac{x_i}{x_{n+1}}.$
When $u^{*}_r$ satisfies \eqref{sap1.1}, let $v_r=\frac{u^{*}_r}{w^*}$ then a straight forward calculation yields $v_r$ satisfies
\be\label{sap3.1}
\left\{
\begin{aligned}
(v_r)_t&=-\tilde{F}^{-1}(\Lambda_{ij}):=\tilde{G}(\Lambda_{ij})\,\,&\mbox{in $P^{-1}(B_r)\times(0, T]:=U_r\times(0, T]$}\\
v_r(\cdot, t)&=\lt[(1+\al)\tilde{t}\rt]^{\frac{1}{1+\al}}\frac{u_0^*}{\sqrt{1-r^2}}\,\,&\mbox{on $\p U_r\times[0, T]$}\\
v_r(\cdot, 0)&=u_0^*x_{n+1}\,\,&\mbox{on $U_r\times\{0\},$}
\end{aligned}
\right.
\ee
where $\tilde{F}=F_*^\al,$ $\Lambda_{ij}=\bn_{ij}v_r-v_r\delta_{ij},$ and $\bn$ is the Levi-Civita Connection of the hyperbolic space.

\begin{lemm}
\label{sap-lem3.1}
Assume  $v$ is a solution of \eqref{sap3.1}. Then we have $$\frac{[(1+\al)T+1]^{\frac{\al}{1+\al}}}{C_3}>\tilde{F}>\frac{1}{C_2x_{n+1}}\,\, \mbox{on $\bar{U}_r\times(0, T]$.}$$
Here,  $C_2=C_2(|u_0^*|_{C^0})$ and $C_3=C_3(c_0, |u_0^*|, r),$ where  $c_0>0$ is the lower bound of $\sigma_k^{\frac{\al}{k}}(\ka[\M_0])$ in \eqref{Cond}.
\end{lemm}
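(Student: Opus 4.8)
The plan is to derive both bounds by the parabolic maximum principle applied to suitable auxiliary functions, exploiting the structure of equation \eqref{sap3.1} together with the $C^0$ bounds from Lemma \ref{c0-lem} and the curvature assumption \eqref{Cond}. For the lower bound $\tilde F > \frac{1}{C_2 x_{n+1}}$, observe that $\tilde F = F_*^\al = \sigma_k^{\al/k}(\kappa)$ and, after the Legendre transform, $\sigma_k^{\al/k}(\kappa[\M_u]) = -\langle X,\nu\rangle / (\text{something involving } w^*)$ type quantities; in the $v_r$ formulation we have $\tilde G(\Lambda_{ij}) = -\tilde F^{-1}$, so the evolution reads $(v_r)_t = -\tilde F^{-1} < 0$. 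I would first note that since $v_r$ is decreasing in $t$ is \emph{not} immediately what I want; instead, the natural thing is to track the quantity $v_r/x_{n+1}$ or $u^*_r$ itself. Concretely, the initial data satisfies $\tilde F(\cdot,0) = \sigma_k^{\al/k}(\kappa[\M_{u_0}]) > c_0 > 0$ by \eqref{Cond}, and at the lateral boundary $v_r$ is the explicit function $[(1+\al)\td t]^{1/(1+\al)} u_0^*/\sqrt{1-r^2}$, whose time derivative is computable and comparable to $-1/(C_2 x_{n+1})$ for a constant depending only on $|u_0^*|_{C^0}$. Then I would apply the maximum principle to $\tilde F$ (equivalently to $(v_r)_t$, which satisfies a linear parabolic equation obtained by differentiating \eqref{sap3.1} in $t$): differentiating the flow in $t$ shows that $\psi := (v_r)_t = \tilde G(\Lambda[v_r])$ satisfies
\[
\psi_t = \tilde G^{ij}\bigl(\bn_{ij}\psi - \psi\,\delta_{ij}\bigr),
\]
a linear parabolic equation with no zeroth-order sign obstruction once one accounts for the $-\psi\delta_{ij}$ term; the extremum of $\psi$ is therefore controlled by its parabolic boundary values, which are the initial and lateral data just discussed. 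This yields $|(v_r)_t| = \tilde F^{-1} \le C_2 x_{n+1}$ on $\bar U_r \times (0,T]$, i.e. the claimed lower bound on $\tilde F$.

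For the upper bound $\tilde F < \frac{[(1+\al)T+1]^{\al/(1+\al)}}{C_3}$, I would again use the linear equation for $\psi = (v_r)_t$, but now seeking a \emph{lower} bound for $|\psi| = \tilde F^{-1}$, equivalently an upper bound for $\tilde F$. The key input is the lower bound $c_0$ on $\sigma_k^{\al/k}$ at $t=0$: at the initial time $\tilde F(\cdot,0) = \sigma_k^{\al/k}(\kappa[\M_{u_0}]) < -\langle X_{u_0},\nu_{u_0}\rangle \le C(|u_0^*|,r)$, so $\tilde F$ is bounded above at $t=0$ by a constant depending on $|u_0^*|$ and $r$. On the lateral boundary, the explicit form of $v_r$ again gives $\tilde F = |(v_r)_t|^{-1}$ with $(v_r)_t = -[(1+\al)\td t]^{-\al/(1+\al)} u_0^*/\sqrt{1-r^2}$, so $\tilde F$ there is exactly $\frac{[(1+\al)\td t]^{\al/(1+\al)}\sqrt{1-r^2}}{-u_0^*} \le \frac{[(1+\al)T+1]^{\al/(1+\al)}}{C_3}$ with $C_3 = C_3(c_0,|u_0^*|,r)$ — here $c_0$ enters because the sign and size of $u_0^*$ on $\p B_r$ is controlled via \eqref{Cond} and Lemma \ref{c0-lem}. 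Applying the maximum principle to $\psi$ (or directly to a rescaled version of $\tilde F$) then propagates these parabolic-boundary bounds into the interior. One should be slightly careful that the time-dependence $[(1+\al)T+1]^{\al/(1+\al)}$ in the stated bound is monotone in $T$, so the boundary value at time $T$ dominates all earlier times, and the maximum principle comparison is legitimate.

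The main obstacle I anticipate is \emph{justifying the maximum principle on the noncompact-in-$t$, but actually compact-in-space} domain $\bar U_r$ with the correct handling of the zeroth-order term $-\psi\,\delta_{ij}$ in the linearized operator $\tilde G^{ij}(\bn_{ij}\psi - \psi\delta_{ij})$: this term has the ``wrong'' sign for a naive maximum principle, so one must either absorb it by working with $e^{\lambda t}\psi$ for suitable $\lambda$, or observe that $\tilde G^{ij}\delta_{ij}>0$ (by concavity/monotonicity of $\tilde G = -F_*^{-\al}$ in the relevant cone) makes it a \emph{favorable} term for the sign one needs — I expect the latter, since $\tilde F = F_*^\al$ with $F_*$ the usual $(\sigma_n/\sigma_{n-k})^{1/k}$ operator, which is concave and increasing, so its reciprocal power $\tilde G$ has the structural properties that make the auxiliary $\psi$-equation behave well. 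A secondary technical point is confirming that $v_r$ is smooth enough (parabolic Schauder, from the long-time existence established in Section \ref{sap}) to differentiate the equation in $t$ and to apply the maximum principle to $\psi$; this is routine given the a priori $C^0$ and the $C^{2,\beta}$ estimates that accompany the solvability of \eqref{sap1.1}.
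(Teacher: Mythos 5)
Your overall strategy — differentiate \eqref{sap3.1} in $t$, recognize $\psi:=(v_r)_t=\tilde G$ as the quantity to control, compute its evolution $\psi_t=\tilde G^{ij}(\bn_{ij}\psi-\psi\delta_{ij})$, and compare to the explicitly computable parabolic boundary values — is the same framework the paper uses. But there is a genuine gap in how you handle the zeroth-order term, and it matters for exactly half of the lemma.

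Write the evolution as $\mathcal L\tilde G=-\tilde G\sum_i\tilde G^{ii}$ with $\mathcal L=\p_t-\tilde G^{ij}\bn_{ij}$. Since $\tilde G<0$ and $\tilde G^{ii}>0$, the right-hand side is \emph{positive}. At an interior parabolic \emph{minimum} of $\tilde G$ one would have $\mathcal L\tilde G\leq0$, a contradiction — so the minimum of $\tilde G$ (equivalently the maximum of $\tilde F^{-1}$, i.e. the \emph{lower} bound on $\tilde F$) is indeed attained on the parabolic boundary, and this is the direction where your "favorable sign" observation genuinely helps. But for the \emph{upper} bound on $\tilde F$ you need to prevent $\tilde G$ from approaching $0$ in the interior, i.e. you need the \emph{maximum} of $\tilde G$ to be at the boundary, and here the sign of $-\tilde G\sum\tilde G^{ii}>0$ works \emph{against} you: an interior maximum of $\tilde G$ is perfectly consistent with $\mathcal L\tilde G>0$. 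The alternative you float, replacing $\psi$ by $e^{\lambda t}\psi$, does not repair this either: it changes the zeroth-order coefficient to $\lambda-\sum_i\tilde G^{ii}$, and $\sum_i\tilde G^{ii}=\tilde F^{-2}\sum_i\tilde F^{ii}$ has no a priori uniform upper bound, so no finite $\lambda$ absorbs it.

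The paper's proof closes this by normalizing with $x_{n+1}$, which satisfies the \emph{same} Jacobi-type equation $\mathcal L x_{n+1}=-x_{n+1}\sum_i\tilde G^{ii}$. Consequently
\[
\mathcal L\!\left(\frac{-\tilde G}{x_{n+1}}\right)=2\tilde G^{ij}\frac{(x_{n+1})_i}{x_{n+1}}\left(\frac{-\tilde G}{x_{n+1}}\right)_j,
\]
a first-order linear parabolic equation with \emph{no} zeroth-order term at all, so both the maximum and minimum of $-\tilde G/x_{n+1}=1/(\tilde F x_{n+1})$ occur on the parabolic boundary. This is what simultaneously yields the upper and lower bounds for $\tilde F$, and it is also where the factor $x_{n+1}$ in the conclusion $\tilde F>1/(C_2x_{n+1})$ comes from — a factor your proposal never produces. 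Your phrase "or directly to a rescaled version of $\tilde F$" gestures in the right direction, but without identifying the specific quantity $\tilde Fx_{n+1}$ and its distinguished evolution equation, the upper bound does not go through.
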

\begin{proof}
Since
\[\tilde{G}_t=\tilde{G}^{ij}((v_t)_{ij}-v_t\delta_{ij})=\tilde{G}^{ij}(\bn_{ij}\tilde{G}-\tilde{G}\delta_{ij}),\]
we have $\mathcal{L}\tilde{G}=-\tilde{G}\sum_{i}\tilde{G^{ii}},$ where $\mathcal{L}:=\frac{\p}{\p t}-\tilde{G}^{ij}\bn_{ij}.$
It is clear that $\mathcal{L}x_{n+1}=-x_{n+1}\sum_i\tilde{G}^{ii}$.
Therefore, we get
$$\mathcal{L}\frac{-\tilde{G}}{x_{n+1}}=\frac{\mathcal{L}(-\tilde{G})}{x_{n+1}}+\frac{\tilde{G}\mathcal{L}(x_{n+1})}{x_{n+1}^2}+2\tilde{G}^{ij}\frac{(x_{n+1})_i}{x_{n+1}}\left(\frac{-\tilde{G}}{x_{n+1}}\right)_j=2\tilde{G}^{ij}\frac{(x_{n+1})_i}{x_{n+1}}\left(\frac{-\tilde{G}}{x_{n+1}}\right)_j.$$
Applying the maximum principal, we get $\tilde{F}x_{n+1}$ achieves its maximum and minimum at the parabolic boundary.
Now denote $\tilde{v}_r=v_r-\lt[(1+\al)\tilde{t}\rt]^{\frac{1}{1+\al}}\frac{u_0^*}{\sqrt{1-r^2}}$, then $\tilde{v}_r$ satisfies
\[
\left\{
\begin{aligned}
(\tilde{v}_r)_t&=-\tilde{F}^{-1}-\lt[(1+\al)\tilde{t}\rt]^{\frac{-\al}{1+\al}}\frac{u_0^*}{\sqrt{1-r^2}}\,\,&\mbox{in $U_r\times(0, T],$}\\
v_r(\cdot, t)&=0\,\,&\mbox{on $\p U_r\times[0, T],$}\\
v_r(\cdot, 0)&=u_0^*x_{n+1}-\frac{u_0^*}{\sqrt{1-r^2}}\,\,&\mbox{on $U_r\times\{0\}.$}
\end{aligned}
\right.
\]
In view of the short time existence theorem, we obtain on $\p U_r\times (0, T]$
\[\tilde{F}=\frac{\lt[(1+\al)\tilde{t}\rt]^{\frac{\al}{1+\al}}\sqrt{1-r^2}}{-u^*_0}.\]
Therefore, we get, on $\p U_r\times(0,T]$, $$\tilde{F}x_{n+1}=\frac{\lt[(1+\al)\tilde{t}\rt]^{\frac{\al}{1+\al}}}{-u^*_0};$$
on $U_r\times\{0\}$, by \eqref{Cond}, we have
$$c_0x^{-1}_{n+1}<\tilde{F}^{-1}x_{n+1}^{-1}<|u_0^*|.$$
This completes the proof of Lemma \ref{sap-lem3.1}.
\end{proof}

\subsection{$C^1$ estimates for $u_r^*$}
\label{sap4}
\begin{lemm}
\label{sap-lem4.1}
Let $u^*_r$ be a solution of \eqref{sap1.1}, then
$|Du_r^*|\leq C:= C(\M_0, r, t, T)\,\,\mbox{ in $\bar{B}_r \times[0, T].$}$
\end{lemm}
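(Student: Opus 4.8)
The plan is to bound $|Du_r^*|$ on $\bar B_r\times[0,T]$ by combining the already-established $C^0$ bound (Lemma \ref{c0-lem}) with an interior gradient estimate for the parabolic equation in \eqref{sap1.1} and a boundary gradient estimate obtained via barriers. Since the equation is uniformly parabolic once $F_*$ is pinched from above and below (Lemma \ref{sap-lem3.1}), on any compact subset of $B_r$ away from $\partial B_r$ one gets an interior gradient bound directly from the structure of the operator; the content of the lemma is really the bound up to the boundary $\partial B_r$, and for that I would first control $|Du_r^*|$ on $\partial B_r\times[0,T]$ and then propagate it inward.

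First I would set up barriers at the spatial boundary. The boundary data on $\partial B_r\times[0,T]$ is $[(1+\al)\td t]^{1/(1+\al)}u_0^*$, which is smooth in $(\xi,t)$, and the initial data $u_0^*$ matches it at $t=0$ after the normalization (here $[(1+\al)(1+\al)^{-1}]^{1/(1+\al)}=1$), so there is no corner incompatibility. To bound $\partial u_r^*/\partial n$ along $\partial B_r$ I would construct upper and lower barriers of the form $\psi^\pm = [(1+\al)\td t]^{1/(1+\al)}u_0^*(\xi) \pm K d(\xi)$, where $d$ is the distance to $\partial B_r$ (or a smooth function comparable to it) and $K$ is large; using that $F_*^{-\al}$ applied to the relevant Hessian is bounded above and below (Lemma \ref{sap-lem3.1} translated back from $\tilde F$ and $v_r$ to $F_*$ and $u_r^*$), together with the uniform ellipticity coefficients $\gas_{ik}$ which are smooth and bounded on $\bar B_r$, one checks that $\psi^+$ is a supersolution and $\psi^-$ a subsolution near $\partial B_r$ for $K$ chosen in terms of $\M_0$, $r$, $T$. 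The comparison principle then yields $|\partial_n u_r^*|\le C$ on $\partial B_r\times[0,T]$, and combined with the $C^0$ bound this gives a full gradient bound $|Du_r^*|\le C$ on $\partial B_r\times[0,T]$ since the tangential derivatives of the boundary data are controlled.

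Next I would upgrade this to an interior bound. Differentiating \eqref{sap1.1} in a spatial direction $e$ gives a linear parabolic equation for $\omega := \partial_e u_r^*$ with bounded, uniformly elliptic coefficients (by Lemma \ref{sap-lem3.1} and smoothness of $\gas$); more robustly, one can run a standard Bernstein-type argument: consider the test quantity $\phi = \eta(\xi)|Du_r^*|^2 + \lambda (u_r^*)^2$ for a suitable cutoff $\eta$ supported in $B_r$ and equal to $1$ on $B_{r'}$, $r'<r$, differentiate the flow, and use the concavity/ellipticity of $F_*^{-\al}$ in the Hessian variables to absorb the bad terms, producing an estimate $\sup_{B_{r'}}|Du_r^*|\le C(\M_0,r,r',t,T)$. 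Alternatively, since the maximum of $|Du_r^*|$ over $\bar B_r\times[0,T]$ is attained either at the parabolic boundary — where it is already bounded by the previous step and the initial data $|Du_0^*|$ — or at an interior point, at which the Bernstein quantity has nonpositive time derivative and nonnegative spatial Hessian contracted with the elliptic coefficients, one gets the bound globally on $\bar B_r\times[0,T]$.

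The main obstacle I expect is the boundary gradient estimate, i.e., constructing barriers that are genuinely sub/supersolutions of the fully nonlinear, non-uniformly-parabolic-near-$\partial B_1$ equation on the fixed domain $B_r$ with $r<1$ — here the coefficients $\gas_{ik}$ and $w^*$ are smooth and bounded since $r<1$, so the degeneracy of the limiting problem \eqref{int1.1} at $\partial B_1$ is not yet an issue, and the time weight $[(1+\al)\td t]^{1/(1+\al)}$ is bounded on $[0,T]$; the real care is in verifying that the linearized operator applied to the distance-function barrier beats the inhomogeneous term uniformly, which forces $K$ and hence $C$ to depend on $r$, $t$, and $T$ (degenerating as $r\to 1$ or $T\to\infty$), exactly as reflected in the statement $C=C(\M_0,r,t,T)$. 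A secondary point is keeping track of the fact that $F_*$ is only controlled on the range where the solution stays admissible (locally strictly convex), which is guaranteed here because $u_r^*$ inherits convexity from the construction and from the $C^2$ estimates that will be proved subsequently; for the purposes of this lemma one may invoke Lemma \ref{sap-lem3.1} directly.
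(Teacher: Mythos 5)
Your route is genuinely different from the paper's, and it has gaps. The paper's proof is much more economical: from Lemma \ref{sap-lem3.1} it pinches $\frac{\sigma_n}{\sigma_{n-k}}(w^*\gas_{ik}(u_r^*)_{kl}\gas_{lj})$ between constants $C_1, C_2$ depending only on $\M_0, r, T$, then invokes Section~5 of \cite{WX20} to produce, for each fixed $t$, convex solutions $\uu^{*t},\lu^{*t}$ of the two \emph{elliptic} Dirichlet problems $\frac{\sigma_n}{\sigma_{n-k}} = C_1$ (resp.\ $C_2$) with the same boundary data $[(1+\al)\td t]^{1/(1+\al)}u_0^*$. The maximum principle sandwiches $\lu^{*t}\leq u_r^*(\cdot,t)\leq\uu^{*t}$ on $\bar B_r$ with equality on $\p B_r$, so the normal derivative of $u_r^*$ is trapped between those of the barriers; finally, the convexity of $u_r^*$ forces $\sup_{\bar B_r}|Du_r^*|$ to be attained on $\p B_r$, and the proof is done. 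You miss this last point entirely, and as a result you invoke an interior Bernstein-type or differentiated-equation argument that is not needed — and, more seriously, not justified.

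The two concrete gaps are the following. First, the assertion that pinching $F_*$ (Lemma \ref{sap-lem3.1}) gives uniform parabolicity is incorrect: the linearized coefficients $\tilde G^{ij}$ depend on the full Hessian of $u_r^*$ through $(F_*)^{ij}$, and controlling the value of $\sigma_n/\sigma_{n-k}$ does not control its partial derivatives with respect to the matrix entries — eigenvalues can still spread out to $0$ and $\infty$ while keeping the quotient bounded. So the interior step would need a $C^2$ bound that is only established later (Section \ref{sap5} and Lemma 20 of \cite{WX212}); the argument is circular as written. Second, the upper barrier $\psi^+ = [(1+\al)\td t]^{1/(1+\al)}u_0^* + K d(\xi)$ with $d$ the distance to $\p B_r$: $d$ is concave, so for the "large $K$" you need to dominate the inhomogeneity, $D^2\psi^+$ generically fails to be positive definite on a neighborhood of $\p B_r$, and then $F_*^{-\al}$ applied to the corresponding $\Lambda^+$ is not defined as a classical object. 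One can salvage this by extending the concave operator by $-\infty$ outside the admissible cone and working with viscosity super/subsolutions, but that is a nontrivial additional step you do not carry out, and in any case the convexity constraint on $K$ competes with the largeness of $K$ you need. The paper's choice of barriers — solutions of the companion elliptic problems — sidesteps both issues: those barriers are automatically convex, admissible, and have gradient bounds built in.
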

\begin{proof}
By Lemma \ref{sap-lem3.1} we know that, for any $t\in(0, T]$  we have
\[C_1<\frac{\s_n}{\s_{n-k}}(w^*\gas_{ik}(u^*_r)_{kl}\gas_{lj})<C_2\]
for some $C_1, C_2$ depending on $\M_0, r, T.$
Applying Section 5 of \cite{WX20}, we get, for any fixed $t\in(0, T]$ there exists $\uu^{*t}$ and $\lu^{*t}$ such that
\[
\left\{
\begin{aligned}
\frac{\s_n}{\s_{n-k}}(w^*\gas_{ik}(\uu^{*t})_{kl}\gas_{lj})&=C_1\,\,&\mbox{in $B_r$}\\
\uu^{*t}&=[(1+\al)\tilde{t}]^{\frac{1}{1+\al}}u^*_0\,\,&\mbox{on $\p B_r,$}
\end{aligned}
\right.
\]
and
\[
\left\{
\begin{aligned}
\frac{\s_n}{\s_{n-k}}(w^*\gas_{ik}(\lu^{*t})_{kl}\gas_{lj})&=C_2\,\,&\mbox{in $B_r$}\\
\lu^{*t}&=[(1+\al)\tilde{t}]^{\frac{1}{1+\al}}u^*_0\,\,&\mbox{on $\p B_r,$}
\end{aligned}
\right.
\]
where $\tilde{t}=t+(1+\al)^{-1}.$
By the maximum principle we derive
\[\lu^{*t}\leq u^*(\cdot, t)\leq\uu^{*t}\,\,\mbox{in $\bar{B}_r.$}\]
Therefore, we obtain
\[|Du^*_r|\leq\max\limits_{\xi\in\p B_r}\{|D\lu^{*t}|, |D\uu^{*t}|\},\]
here we have used the convexity of $u^*_r$.
\end{proof}

\subsection{$C^2$ boundary estimates for $u^*_r$}
\label{sap5}
In this subsection, we will show $D^2u^*_r$ is bounded on $\p B_r\times[0, T].$ From now on, for our convenience, we will consider the solution of equation \eqref{sap3.1}. We suppose $\{\tau_1,\cdots,\tau_n\}$ is the orthonormal frame of the boundary $\p U_r$. Moreover, $\tau_1,\cdots,\tau_{n-1}$ are the tangential vectors and $\tau_n$ is the unit interior normal vector.
 \begin{lemm}
\label{sap-lem5.1}
Let $v$ be the solution of \eqref{sap3.1}, then the second order tangential derivatives on the boundary satisfy
$|\bn_{\al\beta} v|\leq C$ on $\p U_r\times (0, T]$ for $\al, \beta<n.$ Here $C$ depends on $\M_0, r,T$.
\end{lemm}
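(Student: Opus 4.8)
The plan is to establish the tangential second-derivative bound on $\p U_r$ by the standard barrier technique for oblique/Dirichlet problems, adapted to the hyperbolic setting of \eqref{sap3.1}. Since $v$ is prescribed on $\p U_r$ by the time-dependent boundary data $g(\xi,t):=\lt[(1+\al)\tilde t\rt]^{1/(1+\al)}\tfrac{u_0^*}{\sqrt{1-r^2}}$, differentiating the boundary condition twice in a tangential direction $\tau_\al$ ($\al<n$) already controls $\bn_{\al\beta}v$ on $\p U_r$ \emph{up to} the mixed terms involving the normal derivative $\bn_n v$ and the second fundamental form of $\p U_r$; thus the content of the lemma is really that these mixed contributions are controlled, which follows once $|\bn v|$ is bounded — and that is exactly Lemma~\ref{sap-lem4.1} transported through the projection $P$. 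So the first step is bookkeeping: fix $\xi_0\in\p U_r$, choose the frame $\{\tau_1,\dots,\tau_n\}$ as in the statement, write out the identity obtained by applying $\tau_\al\tau_\beta$ to $v\equiv g$ along $\p U_r$, and isolate the ``bad'' term $\bn_{\al n}v$.

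The main step is then to bound $|\bn_{\al n}v|$ on $\p U_r\times(0,T]$. For this I would introduce the tangential-derivative quantity $W:=\bn_\al v$ (more precisely a suitable vector-field-differentiated version that is globally defined near $\xi_0$, e.g. $W=\sum_{\beta<n}a^\beta(\xi)\bn_\beta v$ with $a^\beta$ extending the tangential frame), differentiate the evolution equation \eqref{sap3.1} to see that $W$ satisfies a linear parabolic equation of the form $\mathcal L W = (\text{lower order, controlled by } |\bn v|)$ with $\mathcal L=\p_t-\tilde G^{ij}\bn_{ij}$ as in the proof of Lemma~\ref{sap-lem3.1}, plus correction terms coming from the fact that the linearization of $\tilde G$ in the hyperbolic frame produces curvature terms and terms in $v$ itself, all of which are bounded by the $C^0$ bound (Lemma~\ref{c0-lem}) and the $C^1$ bound (Lemma~\ref{sap-lem4.1}) and the two-sided bound on $\tilde F$ (Lemma~\ref{sap-lem3.1}) — note the latter also gives uniform ellipticity of $\mathcal L$ from below and, together with the gradient bound, concavity/convexity structure of $\tilde G$ gives the upper bound on $\sum\tilde G^{ii}$ needed to absorb error terms. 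Then I would construct a barrier of the form $\Psi = A(\text{dist to }\p U_r) + B|\xi-\xi_0|^2 \pm (W - W(\xi_0))$, using the distance function (or rather $-\log$ of an appropriate defining function) to the boundary of $U_r$ in the hyperbolic metric; with $A\gg B\gg 1$ chosen in the right order one checks $\mathcal L\Psi \le 0$ in a half-ball $U_r\cap B_\rho(\xi_0)$, $\Psi\ge 0$ on its parabolic boundary, and hence $\Psi\ge 0$ inside, which upon differentiating in the interior normal direction at $\xi_0$ yields $|\bn_{\al n}v(\xi_0)|\le C$.

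Finally, feeding this bound back into the identity from the first step gives $|\bn_{\al\beta}v|\le C$ for all $\al,\beta<n$ on $\p U_r\times(0,T]$, with $C$ depending only on $\M_0$, $r$, $T$ (through $C_0,C_1,C_2,C_3$ of the previous lemmas and the geometry of $\p U_r$), as claimed; translating back from $v$ on $U_r$ to $u^*_r$ on $B_r$ via the projection $P$ and $v_r=u^*_r/w^*$ is routine since on the compact set $\bar B_r$ the map $P$ and the factor $w^*=\sqrt{1-|\xi|^2}$ are smooth and nondegenerate.

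I expect the main obstacle to be the construction and verification of the barrier: one must track precisely the curvature terms generated by commuting $\bn$-derivatives past the operator $\tilde G^{ij}\bn_{ij}$ in hyperbolic space and the terms linear in $v$ coming from the $-v\delta_{ij}$ part of $\Lambda_{ij}$, and show they are dominated after choosing the constants $A,B$ in the correct hierarchy; the time-dependence of the boundary data $g$ adds an inhomogeneous term $\p_t g$ which is bounded on $[0,T]$ but must be carried along. None of this is conceptually new — it is the Lorentzian/hyperbolic analogue of the boundary estimates for Hessian-type parabolic equations — but the computation is where the care is needed.
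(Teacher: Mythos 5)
Your first sentence already contains the paper's entire proof, but you then talk yourself out of it. When you apply $\tau_\al\tau_\beta$ to the identity $v\equiv g$ along $\p U_r$ and commute to covariant derivatives, the only extra term that appears is $-II(\tau_\al,\tau_\beta)\,\bn_n v$ — a \emph{first}-order normal derivative times the second fundamental form. Since $\bn_n v$ is controlled by the gradient bound (Lemma \ref{sap-lem4.1} transported to $U_r$), and $II$ and the tangential second derivatives of the boundary data are geometric quantities bounded on $\bar U_r\times[0,T]$, you are done. That is exactly what the paper does, using $\ubar{v}=\tbus/\sqrt{1-|\xi|^2}$ as a globally defined extension of the boundary data so that the second-derivative identity
$\bn_{\al\beta}(v-\ubar{v})=-\bn_n(v-\ubar{v})\,II(\tau_\al,\tau_\beta)$
makes sense and both sides are bounded.

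Where you go wrong is the sentence ``isolate the `bad' term $\bn_{\al n}v$.'' No mixed second derivative appears when you differentiate the Dirichlet condition twice tangentially: the commutator $\bn_{\tau_\beta}\tau_\al$ decomposes into a tangential part (whose action on $v$ is a known tangential derivative of $g$) and a normal part proportional to $II(\tau_\al,\tau_\beta)$, so the only unknown is the first normal derivative $\bn_n v$. The quantity $\bn_{\al n}v$ genuinely requires a barrier argument, but that is the subject of the paper's \emph{next} lemma (Lemma \ref{sap-lem5.3}), not this one. Consequently the bulk of your proposal — the differentiated linear parabolic equation for $W=\bn_\al v$, the barrier $\Psi=A\,\mathrm{dist}+B|\xi-\xi_0|^2\pm(W-W(\xi_0))$, the hierarchy of constants — is machinery for a different estimate and is superfluous here. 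The proof of the present lemma is the one-line ``bookkeeping'' you dismissed, plus the observation that a globally defined subsolution $\ubar{v}$ (rather than just the boundary trace $g$) is the natural object to use in the second-derivative identity.
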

\begin{proof}
Recall the calculation in Subsection \ref{sap2} we know that $\tbus$ is a subsolution of \eqref{sap3.1}.
Let
\begin{eqnarray}\label{eq2}
\ubar{v}=\frac{\tbus}{\sqrt{1-|\xi|^2}}.
\end{eqnarray}
 Since $v-\ubar{v}\equiv 0$ on $\p U_r\times[0, T],$ we have
\[\bn_{\al\beta}(v-\ubar{v})=-\bn_{n}(v-\ubar{v})II(\tau_\al, \tau_\beta)\]
on $\p U_r\times (0, T].$ Here $II$ is the second fundamental form of $\p U_r$. Applying Lemma \ref{sap-lem4.1}, we complete the proof of this Lemma.
\end{proof}

Now, we will show that $|\bn_{\al n}v|$ is bounded. In the following we will denote
\be\label{operator-L}
\mathfrak{L}\phi:=\phi_t-\tilde{F}_v^{-2}\tilde{F}_v^{ij}\bn_{ij}\phi+\phi\tilde{F}_v^{-2}\sum_i\tilde{F}^{ii}_v
\ee
for any smooth function $\phi.$ Here $\tilde{F}_v(\Lambda_{ij})=F^\al_*(\Lambda_{ij}),$ $\tilde{F}^{ij}_v=\frac{\p\tilde{F}_v}{\p\Lambda_{ij}},$
and $\Lambda_{ij}=\bn_{ij}v-v\delta_{ij}.$

\begin{lemm}
\label{sap-lem5.2}
Let $v$ be a solution of \eqref{sap3.1}, $\ubar{v}$ be the subsolution of \eqref{sap3.1} which is defined by \eqref{eq2}, and
$h=(v-\ubar{v})+B\lt(\frac{1}{\sqrt{1-r^2}}-x_{n+1}\rt),$ where $B>0$ is a constant. Then for any given constant $B_1>0$, there exists a sufficiently large $B$ depending on $\M_0, r, T$, such that $\mathfrak{L}h>\frac{B_1}{\tilde{F}_v^2}\sum\tilde{F}^{ii}_v.$
\end{lemm}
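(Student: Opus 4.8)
The plan is to compute $\mathfrak{L}h$ term by term and show that, after choosing $B$ large, the good terms coming from $B\left(\frac{1}{\sqrt{1-r^2}}-x_{n+1}\right)$ dominate. First I would record the basic identities: from the flow equation $v_t=\tilde G(\Lambda_{ij})=-\tilde F^{-1}$, and since $v$ satisfies \eqref{sap3.1}, a differentiation shows $\mathfrak{L}v = \left(\text{bounded terms}\right)\sum_i\tilde F_v^{ii}$; more precisely one uses that $\tilde F_v = \tilde F = F_*^\al$, so the definition of $\mathfrak L$ is arranged exactly so that $\mathfrak{L}$ applied to $v$ produces a controlled expression. Likewise $\mathfrak{L}x_{n+1} = x_{n+1}\tilde F_v^{-2}\sum_i\tilde F_v^{ii} - (\mathfrak L\text{-second-order part of }x_{n+1})$; here the key geometric fact (already used in the proof of Lemma \ref{sap-lem3.1}) is that in the hyperbolic metric $\bn_{ij}x_{n+1}=x_{n+1}\delta_{ij}$, so $-\tilde F_v^{-2}\tilde F_v^{ij}\bn_{ij}x_{n+1} = -x_{n+1}\tilde F_v^{-2}\sum_i\tilde F_v^{ii}$, and these two contributions to $\mathfrak L x_{n+1}$ cancel, leaving $\mathfrak L x_{n+1}=0$. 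That is the point: the term $-Bx_{n+1}$ contributes $\mathfrak L(-Bx_{n+1})=0$ exactly, while $B\cdot\frac{1}{\sqrt{1-r^2}}$ being a constant contributes $B\cdot\frac{1}{\sqrt{1-r^2}}\tilde F_v^{-2}\sum_i\tilde F_v^{ii}$, which is the large positive term we want.

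Wait — I should double-check the sign and which term survives. Since $x_{n+1}\geq 1$ and $\frac{1}{\sqrt{1-r^2}}>1$ need not compare favorably pointwise, I would instead keep both: $\mathfrak L\big(B(\frac{1}{\sqrt{1-r^2}}-x_{n+1})\big) = B\frac{1}{\sqrt{1-r^2}}\tilde F_v^{-2}\sum_i\tilde F_v^{ii} - B\,\mathfrak L x_{n+1}$, and then use $\mathfrak L x_{n+1}=0$ to get exactly $B\frac{1}{\sqrt{1-r^2}}\tilde F_v^{-2}\sum_i\tilde F_v^{ii}$, which is positive and grows linearly in $B$. Next, for the $v-\ubar v$ part: since $\ubar v$ is a subsolution of \eqref{sap3.1}, $\ubar v_t \leq \tilde G(\bn_{ij}\ubar v - \ubar v\delta_{ij})$, and by concavity of $\tilde G=-\tilde F^{-1}$ (equivalently convexity properties of $F_*^{-\al}$ as a function of the matrix entries, which holds since $F_*$ is concave and the relevant composition is monotone — this is where I invoke that $\left(\frac{\sigma_n}{\sigma_{n-k}}\right)^{1/k}$ is concave on the positive cone) one gets $\mathfrak L(v-\ubar v)\geq -C\sum_i\tilde F_v^{ii} - C$, with $C$ depending on $\M_0,r,T$ through the $C^0,C^1$ bounds of Lemmas \ref{c0-lem}, \ref{sap-lem4.1} and the $F_*$ bounds of Lemma \ref{sap-lem3.1}. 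Crucially $\tilde F_v^{-2}\sum_i\tilde F_v^{ii}$ is bounded below by a positive constant (again using the upper bound on $\tilde F$ and a lower bound on the trace $\sum_i\tilde F_v^{ii}$, which follows from the structure of $F_*^\al$ and ellipticity), so the error $-C\sum_i\tilde F_v^{ii}-C$ is absorbed.

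Putting it together: $\mathfrak L h \geq -C\sum_i\tilde F_v^{ii} - C + B\frac{1}{\sqrt{1-r^2}}\tilde F_v^{-2}\sum_i\tilde F_v^{ii}$. Since $\tilde F_v \leq C(T)$ on $\bar U_r\times(0,T]$ by Lemma \ref{sap-lem3.1}, we have $\tilde F_v^{-2}\geq c(T)>0$, hence $B\frac{1}{\sqrt{1-r^2}}\tilde F_v^{-2}\sum_i\tilde F_v^{ii}\geq B c' \sum_i\tilde F_v^{ii}$ for a constant $c'=c'(r,T)>0$; also $\sum_i\tilde F_v^{ii}\geq c''>0$. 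Choosing $B$ so large that $Bc' - C > B_1\sup(\tilde F_v^2)\cdot(\text{const})$ and $Bc'c''/2 > C$, we conclude $\mathfrak L h > \frac{B_1}{\tilde F_v^2}\sum_i\tilde F_v^{ii}$ on $U_r\times(0,T]$, as desired.

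The main obstacle will be verifying the inequality $\mathfrak L(v-\ubar v)\geq -C\sum_i\tilde F_v^{ii}-C$ honestly: one must carefully expand $\mathfrak L\ubar v$ using the subsolution property together with the explicit form $\ubar v = \tbus/\sqrt{1-|\xi|^2}$, controlling the discrepancy between the linearization at $v$ and the operator evaluated at $\ubar v$ via concavity of $\tilde G$, and then bound all lower-order terms in terms of $\sum_i\tilde F_v^{ii}$ plus a constant — this is the standard but delicate barrier computation, and keeping track that all constants depend only on $\M_0,r,T$ (through the already-established $C^0$, $C^1$, and $F_*$ estimates) is the crux. The cancellation $\mathfrak L x_{n+1}=0$ and the positivity and two-sided control of $\tilde F_v^{-2}\sum_i\tilde F_v^{ii}$ are the other two ingredients, but those are comparatively mechanical given Lemma \ref{sap-lem3.1}.
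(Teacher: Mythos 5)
Your overall structure matches the paper and the key computation you record — that $\bn_{ij}x_{n+1}=x_{n+1}\delta_{ij}$ forces $\mathfrak{L}x_{n+1}=0$ and hence $\mathfrak{L}\bigl(\tfrac{1}{\sqrt{1-r^2}}-x_{n+1}\bigr)=\tfrac{1}{\sqrt{1-r^2}}\tilde F_v^{-2}\sum_i\tilde F_v^{ii}$ — is exactly the paper's first step. Where you diverge is in the treatment of $\mathfrak{L}(v-\ubar v)$. You invoke concavity of $\tilde G=-\tilde F^{-1}=-F_*^{-\al}$ (correct for $\al\geq 1$, since $F_*>0$ concave makes $F_*^{-\al}$ convex), combined with the subsolution property $\ubar v_t\leq\tilde G(\ubar\Lambda_{ij})$. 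Pushed to its conclusion, this gives more than you claim: writing $\ubar v_t-v_t\leq\tilde G(\ubar\Lambda)-\tilde G(\Lambda)\leq\tilde G^{ij}(\Lambda)(\ubar\Lambda_{ij}-\Lambda_{ij})$ and noting $\tilde G^{ij}=\tilde F_v^{-2}\tilde F_v^{ij}$ yields $\mathfrak{L}(v-\ubar v)\geq 0$ directly, with no error terms at all — so your weaker estimate $\geq -C\sum_i\tilde F_v^{ii}-C$ (and the subsequent need for a lower bound on $\sum_i\tilde F_v^{ii}$ and on $\tilde F_v^{-2}$) is actually unnecessary overhead. The paper, by contrast, does not use concavity of $\tilde G$ at all: it expands $\mathfrak{L}(v-\ubar v)$ using homogeneity, reduces the claim to the scalar inequality \eqref{eq3}, and proves that by concavity of $F_*=\tilde F^{1/\al}$ (i.e.\ the Newton–Maclaurin side) plus a case split on the size of $\tilde F_v$. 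The paper's route avoids any a priori lower bound on $\tilde F_v$ or $\sum_i\tilde F_v^{ii}$ for this step; yours leans on Lemma \ref{sap-lem3.1} for the lower bound on $\tilde F_v$ and on a homogeneity/concavity argument for a lower trace bound, both of which are available but make the argument less self-contained. In short: your approach is sound and, if you sharpen the concavity step to $\mathfrak{L}(v-\ubar v)\geq 0$, it is actually cleaner than the paper's; as written, it trades the paper's algebraic case analysis for reliance on the $F_*$-bounds from Lemma \ref{sap-lem3.1} and on a trace lower bound you only gesture at.

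Two small cautions worth flagging. First, the assertion that ``$\sum_i\tilde F_v^{ii}$ is bounded below ... follows from the structure of $F_*^\al$ and ellipticity'' is not automatic from ellipticity alone; it follows from homogeneity and concavity of $F_*$ together with the lower bound on $\tilde F_v$ (via $\tfrac{1}{\al}\tilde F_v^{1/\al-1}\sum_i\tilde F_v^{ii}\geq F_*(1,\dots,1)$). Second, your closing chain of inequalities (``$Bc'-C>B_1\sup(\tilde F_v^2)\cdot(\text{const})$'') is garbled: what you actually need is $Bc'-C-B_1/(\inf\tilde F_v)^2>C/c''$, which is achievable for $B$ large; the conclusion is right but the display should be cleaned up.
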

\begin{proof}
A direct calculation gives
\[
\begin{aligned}
&\mathfrak{L}\lt(\frac{1}{\sqrt{1-r^2}}-x_{n+1}\rt)\\
=&-\tilde{F}_v^{-2}\tilde{F}_v^{ij}\bn_{ij}(-x_{n+1})+\lt(\frac{1}{\sqrt{1-r^2}}-x_{n+1}\rt)\tilde{F}_v^{-2}\sum\tilde{F}_v^{ii}\\
=&\frac{1}{\sqrt{1-r^2}}\tilde{F}^{-2}_v\sum\tilde{F}^{ii}_v,
\end{aligned}
\]
here we have used $\bn_{ij}x_{n+1}=x_{n+1}\delta_{ij}$.
It is easy to see that to prove Lemma \ref{sap-lem5.2} we only need to show there exists some $B_2>0$ such that
\be\label{sap5.2}
\mathfrak{L}(v-\ubar{v})>-\lt(\ubar{v}_t+\tilde{F}^{-1}(\ubar{\Lambda}_{ij})+\frac{B_2}{\tilde{F}^2_v}\sum_i\tilde{F}^{ii}_v\rt),
\ee
where $\ubar{\Lambda}_{ij}=\bn_{ij}\ubar{v}-\ubar{v}\delta_{ij}.$

In the rest of this proof, the value of $B_2$ may vary from line to line.
Notice that inequality \eqref{sap5.2} is equivalent to
\[
\begin{aligned}
&(v-\ubar{v})_t-\tilde{F}^{-2}_v\tilde{F}_v^{ij}\bn_{ij}(v-\ubar{v})_{ij}+(v-\ubar{v})\tilde{F}^{-2}_v\sum_i\tilde{F}^{ii}_v\\
>&-\ubar{v}_t-\tilde{F}^{-1}(\ubar{\Lambda}_{ij})-\frac{B_2}{\tilde{F}^2_v}\sum_i\tilde{F}_v^{ii},\\
\end{aligned}
\]
which can be simplified as
\be\label{sap5.3}
-\frac{(1+\al)}{\tilde{F}_v}+\tilde{F}_v^{-2}\tilde{F}^{ii}_v\ubar{\Lambda}_{ii}+\frac{B_2}{\tilde{F}^2_v}\sum_i\tilde{F}^{ii}_v>-\tilde{F}^{-1}(\ubar{\Lambda}_{ij}).
\ee
Since $\tilde{F}^{\frac{1}{\al}}$ is concave we have
\[\frac{1}{\al}\tilde{F}_v^{\frac{1}{\al}-1}\tilde{F}_v^{ij}\ubar{\Lambda}_{ij}\geq\tilde{F}^{\frac{1}{\al}}(\ubar{\Lambda}_{ij})\ \
\text{ and } \ \frac{1}{\al}\tilde{F}_v^{\frac{1}{\al}-1}\sum_i\tilde{F}^{ii}_v\geq \td{F}^{\frac{1}{\al}}(1, \cdots, 1).\] This implies
\[\mbox{\textbf{l.h.s.} of \eqref{sap5.3}$\geq-\dfrac{1+\al}{\tilde{F}_v}+\tilde{F}_v^{-2}
\lt[\dfrac{\al\tilde{F}^{\frac{1}{\al}}(\ubar{\Lambda}_{ij})}{\tilde{F}^{\frac{1}{\al}-1}_v}\rt]+\dfrac{\al B_2}{\tilde{F}_v^{\frac{1}{\al}+1}c},$}\]
where $c=(C^k_n)^{1/k}$ is the combination number. One can see that \eqref{sap5.3} can be derived from the following inequality:
\begin{eqnarray}\label{eq3}
\al\tilde{F}^{\frac{1}{\al}}(\ubar{\Lambda}_{ij})+\frac{B_2\al}{c}+\frac{\tilde{F}_v^{1+\frac{1}{\al}}}{\tilde{F}(\ubar{\Lambda}_{ij})}\geq(1+\al)\tilde{F}_v^{\frac{1}{\al}}.
\end{eqnarray}
Recall \eqref{eq2}, we have
\[\tilde{F}(\ubar{\Lambda}_{ij})=F_*^\al(w^*\gas_{ik}\tbus_{kl}\gas_{lj})<[(1+\al)\tilde{t}]^\frac{\al}{1+\al}c_0^{-1},\]
where $c_0$ is given by assumption \eqref{Cond}.
When $\tilde{F}_v>[(1+\al)\tilde{T}]^\frac{\al}{1+\al}c_0^{-1}(1+\al),$  we have $\tilde{F}_v^{1+\frac{1}{\al}}\geq(1+\al)\tilde{F}_v^{\frac{1}{\al}}$ and \eqref{eq3} follows.
When $\tilde{F}_v\leq [(1+\al)\tilde{T}]^\frac{\al}{1+\al}c_0^{-1}(1+\al),$ choose
$B_2\geq c\frac{(1+\al)^{1+\frac{1}{\al}}}{\al}[(1+\al)\tilde{T}]^{\frac{1}{(1+\al)}}c_0^{-\frac{1}{\al}},$ then \eqref{eq3} follows.
Here $\tilde{T}=T+(1+\al)^{-1}.$
Therefore, we conclude that when $B_2:=B_2(n, k, c_0, T)$ is sufficiently large, \eqref{eq3} always holds. This completes the proof of this Lemma.
\end{proof}

\begin{lemm}
\label{sap-lem5.3}
Let $v$ be a solution of \eqref{sap3.1} and suppose $\tau_n$ is the interior unit normal vector filed of $\p U_r.$ We have $|\bn_{\al n}v|\leq C$
on $\p U_r\times (0, T]$. Here $C$ depends on $\M_0, r, T$.
\end{lemm}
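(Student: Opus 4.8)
\textbf{Proof proposal for Lemma \ref{sap-lem5.3}.}

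The plan is to run a standard barrier argument in the tangential-normal directions, using the function $h$ from Lemma \ref{sap-lem5.2} together with a tangential-derivative operator applied to $v-\ubar v$. First I would fix a boundary point, which after a rotation of the $\xi$-coordinates we may take so that $\tau_n$ is the interior radial direction and $\tau_1,\dots,\tau_{n-1}$ are the remaining orthonormal boundary directions there. The key object is the linearized operator: since $v$ solves $(v)_t=-\tilde F_v^{-1/\al}$-type equation (more precisely $(v)_t=\tilde G=-\tilde F^{-1}$), differentiating the flow equation in a direction $\tau_\al$ tangent to $\p U_r$ shows that $\bn_\al v$ satisfies, up to controllable lower-order terms coming from the curvature of hyperbolic space, the equation $\mathfrak L(\bn_\al v)=(\text{bounded})\cdot\tilde F_v^{-2}\sum_i\tilde F_v^{ii}$, where $\mathfrak L$ is exactly the operator \eqref{operator-L} introduced before Lemma \ref{sap-lem5.2}. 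Likewise $\bn_\al\ubar v$ can be differentiated, but here $\ubar v$ is not a solution, so $\mathfrak L(\bn_\al(v-\ubar v))$ picks up a fixed bounded right-hand side plus the $\sum_i\tilde F_v^{ii}$ term. The point of writing $\mathfrak L$ in the normalized form with the $\tilde F_v^{-2}$ factors is precisely so that this works uniformly.

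Next I would form the barrier
\[
\Psi^\pm = A\, h \pm \bigl(\bn_\al(v-\ubar v)\bigr) + N\!\sum_{\beta<n}\bigl(\bn_\beta(v-\ubar v)\bigr)^2,
\]
for constants $1\ll N\ll A$ to be chosen, on a half-ball-type neighborhood $\Omega_\delta=U_r\cap B_\delta(\xi_0)\times(0,T]$. On $\p U_r$ we have $v-\ubar v\equiv0$, so the tangential derivatives $\bn_\beta(v-\ubar v)$, $\beta<n$, vanish there, and the quadratic term is used to dominate the cross terms that arise when $\mathfrak L$ hits $(\bn_\beta(v-\ubar v))^2$, producing a good negative contribution $-c\,\tilde F_v^{-2}\tilde F_v^{ij}\bn_i(\cdots)\bn_j(\cdots)$ that absorbs the bad gradient terms from $\pm\mathfrak L(\bn_\al(v-\ubar v))$ via Cauchy--Schwarz, at the cost of adding more copies of $\sum_i\tilde F_v^{ii}$. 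Lemma \ref{sap-lem5.2}, applied with $B_1$ chosen large after $N$ is fixed, then guarantees $\mathfrak L(Ah)$ beats all the accumulated $\tilde F_v^{-2}\sum_i\tilde F_v^{ii}$ terms, so $\mathfrak L\Psi^\pm\ge 0$ on $\Omega_\delta$. For the boundary of the parabolic domain: on the curved part $\p B_\delta(\xi_0)\cap U_r$ one has $h\ge c(\delta)>0$ by the defining properties of $h$ (the term $B(\frac{1}{\sqrt{1-r^2}}-x_{n+1})$ is strictly positive there, and $v-\ubar v$ is bounded by the $C^1$ estimate Lemma \ref{sap-lem4.1}), so choosing $A$ large makes $\Psi^\pm\ge0$; on the flat part $\p U_r\cap B_\delta$, $h$ vanishes but so do the first-order and quadratic tangential terms, hence $\Psi^\pm=0$ there; and at $t=0$ everything is controlled by the smooth initial data $u_0^*$. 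The minimum principle for $\mathfrak L$ then forces $\Psi^\pm\ge0$ throughout, and since $\Psi^\pm=0$ on the flat boundary piece containing $\xi_0$, the interior-normal derivative satisfies $\bn_n\Psi^\pm\ge0$ there, which reads $\mp\,\bn_{\al n}(v-\ubar v)\le A\,\bn_n h$ at $\xi_0$. Combined with the explicit bound on $\bn_n h$ and the bound on $\bn_{\al n}\ubar v$ coming from the known subsolution $\tbus$, this yields $|\bn_{\al n}v|\le C(\M_0,r,T)$ at $\xi_0$, and since $\xi_0$ was arbitrary the lemma follows.

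The main obstacle I expect is the bookkeeping of the $\sum_i\tilde F_v^{ii}$ terms and making sure the chain of constant choices $N$, then $B_1$ (hence $B$ in $h$), then $A$, then $\delta$, is consistent and uses only the already-established bounds — in particular the two-sided bound on $\tilde F_v=F_*^\al$ from Lemma \ref{sap-lem3.1} (which is what keeps $\tilde F_v^{-2}$ and $\tilde F_v^{1+1/\al}$ comparable) and the concavity of $\tilde F_v^{1/\al}$ used in Lemma \ref{sap-lem5.2}. A secondary technical point is computing the commutator/curvature terms produced when $\bn_{ij}$ is commuted past the tangential derivative $\bn_\al$ in hyperbolic space: these are genuinely there (unlike in the Euclidean case) but are linear in $\bn^2 v$ with coefficients controlled by $v$ and $\bn v$, hence harmless after multiplying by $\tilde F_v^{-2}\tilde F_v^{ij}$ and using $\sum_i\tilde F_v^{ii}\ge c>0$.
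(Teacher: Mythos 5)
Your proposal is in the right neighborhood but follows a genuinely different route from the paper, and the difference matters.

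The paper does \emph{not} differentiate in an arbitrary unit tangential direction $\tau_\al$ and then fight commutator terms. Instead it uses the angular derivative $\T := \xi_\al\p_n - \xi_n\p_\al$, the generator of rotations about the origin of $B_r$. Because the equation in the $\xi$-coordinates has exact rotational symmetry, $\T$ commutes cleanly with the fully nonlinear operator: this is the content of Lemma 29 of \cite{RWX19}, which the proof quotes to get \eqref{sap5.5}, i.e.\ $\p_t(\T u_r^*) - \tilde G^{ij}(w^*\gas_{ik}(\T u_r^*)_{kl}\gas_{lj})w^* = 0$ with \emph{no} lower-order remainder. Passing to $v = u_r^*/w^*$ then yields $(\T v)_t - \tilde G^{ij}\bn_{ij}(\T v) + (\T v)\sum\tilde G^{ii} = 0$ exactly, so $\tilde\psi := \T v - \T\ubar v$ satisfies $|\mathfrak L\tilde\psi|\leq C_4\sum_i\tilde G^{ii}$ with all the error coming only from the fixed smooth subsolution $\ubar v$. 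The barrier $h$ from Lemma \ref{sap-lem5.2} is then applied directly to $\tilde\psi$; there is no quadratic tangential term, no Cauchy--Schwarz absorption, and no half-ball localization, because there are no bad gradient terms to absorb in the first place. By contrast, your proposal uses a unit tangential frame field $\tau_\al$, which is not Killing, so commutators of the form $\tilde F^{ij}\bn_i\tau^k\bn_{kj}v$ do appear, and you invoke the standard Guan/Ivochkina quadratic barrier $N\sum_{\beta<n}(\bn_\beta(v-\ubar v))^2$ to handle them. That machinery is robust and would work for general convex domains, but it is heavier than necessary here, and the proposal leaves the commutator bookkeeping unverified.

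One caveat you should be aware of: the claim that the commutator terms are ``harmless after multiplying by $\tilde F_v^{-2}\tilde F_v^{ij}$ and using $\sum_i\tilde F_v^{ii}\geq c>0$'' is not quite the right justification. The dangerous term $\tilde F^{ij}\bn_i\tau^k\bn_{kj}v$ is controlled because, writing $\bn_{kj}v = \Lambda_{kj} + v\delta_{kj}$, the positive definiteness of $\Lambda$ (convexity) and the degree-$\al$ homogeneity of $\tilde F$ give $\sum_i\tilde F^{ii}\lambda_i = \al\tilde F$, which is bounded by Lemma \ref{sap-lem3.1}. Without the convexity and homogeneity, $\tilde F^{ij}\Lambda_{kj}$ is not dominated by $\sum\tilde F^{ii}$, so the mere lower bound on $\sum\tilde F^{ii}$ would not rescue you. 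If you pursue your variant, you should spell this out rather than dismissing the commutators as generically harmless. Alternatively, simply adopt the paper's $\T$-vector-field device, which sidesteps the entire issue.
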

\begin{proof}
Denote the angular derivative $\T:=\xi_\al\p_n-\xi_n\p_\al,$ where $\p_i:=\frac{\p}{\p\xi_i}$
and $\al<n.$ Let $\phi(\cdot, t)=u_r^*(\cdot, t)-\tbus(\cdot, t),$
then on $\lt(\p B_r\times[0, T]\rt)\cup \lt(B_r\times\{0\}\rt)$ we have $\phi\equiv 0.$ Therefore, we have
\[\T\phi(x, t)=0\,\,\mbox{for $(x, t)\in\p B_r\times[0, T]$ and $\T\phi(x, 0)=0$ for $x\in B_r$}.\]
In the following we denote
$\tilde{G}(w^*\gas_{ik}(u_r^*)_{kl}\gas_{lj})=-F_*^{-\al}(\ka^*[w^*\gas_{ik}(u_r^*)_{kl}\gas_{lj}]),$ then $u_r^*$ satisfies
\be\label{sap5.4}
\frac{\p u_r^*}{\p t}-\tilde{G}(w^*\gas_{ik}(u^*_r)_{kl}\gas_{lj})w^*=0.
\ee
Notice that  $\T$ is an angular derivative vector with respect to the origin of $B_r$.
By Lemma 29 of \cite{RWX19}, we get
\be\label{sap5.5}\frac{\p (\T u_r^*)}{\p t}-\tilde{G}^{ij}(w^*\gas_{ik}(\T u^*_r)_{kl}\gas_{lj})w^*=0,\ee
where  $\tilde{G}^{ij}=\frac{\p\tilde{G}}{\p a_{ij}}$ for  $a_{ij}=w^*\gas_{ik}(u^*_r)_{kl}\gas_{lj}$.

Recall that $\tbus=[(1+\al)\tilde{t}]^{\frac{1}{1+\al}}\uu^*$ and $\uu^*$ is the Legendre transform of $u_0.$ It is clear that $\T\tbus=[(1+\al)\tilde{t}]^\frac{1}{1+\al}\T\uu^*.$
Applying Lemma 15 of \cite{WX20}, we know
\[\bn_{ij}\lt(\frac{u^*}{w^*}\rt)-\frac{u^*}{w^*}\delta_{ij}=w^*\gas_{ik}u^*_{kl}\gas_{lj}.\]
Combining with \eqref{sap5.5} we obtain
\[(\T v)_t-\tilde{G}^{ij}\bn_{ij}(\T v)+(\T v)\sum\tilde{G}^{ii}=0,\]
where $\T v=\T\lt(\frac{u_r^*}{w^*}\rt)=\frac{\T u^*_r}{w^*},$ and $\tilde{G}^{ij}=\tilde{F}^{-2}_v\tilde{F}^{ij}_v$. A straightforward calculation yields
\[
\begin{aligned}
&|\mathfrak{L}\T\ubar{v}|\\
=&\bigg|[(1+\al)\tilde{t}]^{-\frac{\al}{1+\al}}\T \frac{\uu^*}{w^*}-\tilde{G}^{ij}\bn_{ij}\left(\T \frac{\uu^*}{w^*}\right)\cdot[(1+\al)\tilde{t}]^{\frac{1}{1+\al}}+[(1+\al)\tilde{t}]^{\frac{1}{1+\al}}\T \frac{\uu^*}{w^*}\sum_i\tilde{G}^{ii}\bigg|\\
\leq& C_4\sum_i\tilde{G}^{ii},
\end{aligned}
\]
where $C_4$ is some constant depending on $\uu^*, T, r$.

Let $\tilde{\psi}(x,t)=\T v(x,t)-\T\ubar{v}(x,t),$ then we have
\[
\begin{aligned}
\tilde{\psi}(x, 0)= 0\,\,\mbox{for any $x\in U_r,$}\ \
\tilde{\psi}(x, t)= 0\,\, \mbox{for any $(x,t)\in\p U_r\times (0, T].$}
\end{aligned}
\]
Moreover,  we have
$\lt|\mathfrak{L}\tilde{\psi}\rt|\leq C_4\sum\limits_i\tilde{G}^{ii}.$ Applying Lemma \ref{sap-lem5.2} we have, when $B>0$
 very large
 \[\mathfrak{L}(\tilde{\psi}-h)\leq0\,\,\mbox{in $U_r\times(0, T].$}\]
 It's easy to see that $\tilde{\psi}\leq h$ on the parabolic boundary $(\p U_r\times(0, T])\cup(U_r\times\{0\}).$ By the maximum principle, we get
 $h\geq\tilde{\psi}$ in $U_r\times(0, T].$ Therefore, $h_n>\tilde{\psi}_n$ on $\p U_r\times(0, T],$
 which yields $\bn_{\al n}v\leq C_5.$ Similarly, by considering $\T\ubar{v}-\T v$, we obtain
 $\bn_{\al n}v\geq C_{6}.$
 \end{proof}

 \begin{lemm}\label{sap-lem5.4}
 Let $v$ be a solution of \eqref{sap3.1} and suppose $\tau_n$ is the interior unit normal vector filed of $\p U_r.$ Then we have $|\bn_{nn}v|\leq C$
on $\p U_r\times (0, T],$ where  $C$ is a positive constant depending on $\M_0, r, T$.
 \end{lemm}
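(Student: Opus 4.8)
The plan is to follow the standard two-sided strategy for the double normal second derivative on the boundary. Since $\tilde F = F_*^\al$ is a one-homogeneous (after taking $\al$-th root) operator and we already control $v$ and its first derivatives (Lemma \ref{sap-lem4.1}) and all the other second derivatives on $\p U_r$ (Lemmas \ref{sap-lem5.1} and \ref{sap-lem5.3}), it suffices to produce \emph{uniform upper and lower bounds} for the quantity $\bn_{nn} v$ at each boundary point $p\in\p U_r$ and each $t\in(0,T]$. Working at a fixed $p$ in the orthonormal frame $\{\tau_1,\dots,\tau_n\}$, write the full second fundamental matrix $\Lambda_{ij}=\bn_{ij}v-v\delta_{ij}$ in block form: the $(n-1)\times(n-1)$ tangential block and the mixed entries $\Lambda_{\al n}$ are already bounded, so the only unknown is $\Lambda_{nn}=\bn_{nn}v-v$.

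\textbf{Upper bound.} The key point is that the flow equation $v_t=-\tilde F^{-1}(\Lambda_{ij})$, together with the $C^0$, $C^1$ bounds and the upper bound $\tilde F \le [(1+\al)T+1]^{\al/(1+\al)}/C_3$ from Lemma \ref{sap-lem3.1}, gives a \emph{positive lower bound} $\tilde F \ge c_* >0$ on $\bar U_r\times(0,T]$ (this is exactly the other half of Lemma \ref{sap-lem3.1}). Now use the structural fact that $\tilde F$ is \emph{monotone increasing} and, being $\sigma_k^{\al/k}$-type, bounded above by $C\,(\text{largest eigenvalue of }\Lambda)^{k\cdot(\al/k)}$ up to a factor depending on a lower bound for the other eigenvalues. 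Since $\tilde F$ is bounded above along the flow and the tangential block of $\Lambda$ is controlled, if $\Lambda_{nn}$ were very large then $\tilde F(\Lambda)$ would be forced to be large as well (here one uses that the $k$-th elementary symmetric function of a $k$-convex matrix is comparable to the product of its $k$ largest eigenvalues, and with the tangential part bounded below and $\Lambda_{nn}$ huge, some $\sigma_k$-minor blows up), contradicting the upper bound on $\tilde F$. This yields $\bn_{nn}v \le C$.

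\textbf{Lower bound.} This is the harder direction and where I expect the main obstacle. One needs to rule out $\Lambda_{nn}$ becoming very small (recall $\M_v$ is locally strictly convex, so $\Lambda>0$, but we need a \emph{quantitative} lower bound). The standard device is to show $\sigma_k(\Lambda)$ stays bounded \emph{below} on $\p U_r$: from $\tilde F \ge c_*>0$ we get $\sigma_k(\Lambda)^{\al/k}\ge c_*$, hence $\sigma_k(\Lambda)\ge c_*^{k/\al}$; expanding $\sigma_k(\Lambda)$ in the block decomposition, $\sigma_k(\Lambda) = \Lambda_{nn}\,\sigma_{k-1}(\Lambda')+\sigma_k(\Lambda') + (\text{terms involving the bounded }\Lambda_{\al n})$, where $\Lambda'$ is the tangential block. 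Since all of $\Lambda'$, $\Lambda_{\al n}$ and $\sigma_{k-1}(\Lambda')$ are bounded above, and $\sigma_{k-1}(\Lambda')$ is bounded \emph{below away from zero} — this last point requires a separate argument, namely a boundary lower bound for the tangential curvatures, obtained by comparing with the strictly convex subsolution $\ubar v$ whose tangential curvatures on $\p U_r$ are bounded below — we conclude $\Lambda_{nn} \ge (c_*^{k/\al} - C)/\delta_0 \ge -C'$, i.e. $\bn_{nn}v = \Lambda_{nn}+v \ge -C$. The delicate part is securing the lower bound $\sigma_{k-1}(\Lambda')\ge\delta_0>0$ on the boundary; if $k=1$ this is trivial ($\sigma_0\equiv 1$), and for general $k$ one leans on the comparison $\Lambda \ge \ubar\Lambda$ in the tangential directions coming from $v\ge\ubar v$ with equality on $\p U_r$ (so $\bn_n(v-\ubar v)\le 0$ and the tangential Hessians agree up to the second fundamental form of $\p U_r$ times $\bn_n(v-\ubar v)$, which is controlled), together with strict convexity of the fixed hypersurface $\M_{\ubar u}$. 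Assembling the upper and lower bounds and combining with Lemmas \ref{sap-lem5.1}–\ref{sap-lem5.3} completes the proof that $|D^2 u_r^*|$, equivalently $|\bn^2 v|$, is bounded on $\p U_r\times(0,T]$.
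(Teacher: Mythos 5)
Your proposal rests on a misidentification of the operator that governs the transformed flow \eqref{sap3.1}, and this makes both halves of the argument go wrong. In the dual/projected coordinates, $\tilde F = F_*^{\al}$ where $F_* = \bigl(\sigma_n/\sigma_{n-k}\bigr)^{1/k}$ (see the definition preceding \eqref{int1.1} and the proof of Lemma \ref{sap-lem3.1}), \emph{not} a $\sigma_k^{\al/k}$-type operator as you assert. This changes the structure entirely. Writing, in the boundary frame, $\sigma_n(\Lambda) = A_{n-1}\Lambda_{nn} + B_n$ and $\sigma_{n-k}(\Lambda) = A_{n-k-1}\Lambda_{nn} + B_{n-k}$ with $A_{m} = \sigma_{m}(\ka[\Lambda_{\al\beta}])$, the ratio $\sigma_n/\sigma_{n-k}$ tends to the \emph{finite} limit $A_{n-1}/A_{n-k-1}$ as $\Lambda_{nn}\to\infty$. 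So the upper bound on $\tilde F$ from Lemma \ref{sap-lem3.1} does \emph{not} force $\Lambda_{nn}$ to be bounded; your claim that a huge $\Lambda_{nn}$ would make $\tilde F$ large is false for this operator. This is precisely the obstruction that makes the double-normal estimate delicate here and is why the paper invokes Trudinger's device rather than the naive monotonicity argument.

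You also have the easy and hard halves reversed. The lower bound $\bn_{nn}v\ge -C$ is the trivial part: $(\Lambda_{ij})$ is positive definite (strict convexity), so $\Lambda_{nn}>0$, and $\bn_{nn}v = \Lambda_{nn} + v\ge v\ge -C$ from the $C^0$ estimate. What actually requires work is the \emph{upper} bound, and the paper's route is Trudinger's: (i) prove the algebraic inequality $A_{n-1}/A_{n-k-1}\ge \sigma_n/\sigma_{n-k}$, which shows the limiting value dominates the prescribed boundary value; (ii) set $f=\bigl(\sigma_{n-1}(\ka')/\sigma_{n-k-1}(\ka')\bigr)^{1/k}$ acting on the tangential block only, and show $d := f - \text{(prescribed boundary value)} \ge 0$ with $d(\cdot,0)\ge c>0$; (iii) at a minimum point $(y,t_0)$ of $d$, use concavity of $f$ and the linearized boundary identity $\bn_{\al\beta}(v-\ubar v) = -\bn_n(v-\ubar v)\rho_{\al\beta}$ to build a barrier $\phi$, compared against the supersolution $h$ of Lemma \ref{sap-lem5.2} via the maximum principle, to bound $\bn_{nn}v(y,t_0)$; (iv) conclude $d\ge C_9>0$ on all of $\p U_r\times[0,T]$, which together with \eqref{eq10} pins down $\Lambda_{nn}$. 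None of this machinery — the ratio decomposition, the tangential operator $f$, its concavity, and the barrier at the minimum of $d$ — appears in your sketch, and the direct monotonicity argument you propose in its place does not close.
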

\begin{proof}
Recall that in the proof of Lemma \ref{sap-lem3.1} we have shown, on the boundary $\p U_r\times (0, T]$,
\[F_*^\al=\frac{[(1+\al)\tilde{t}]^{\frac{\al}{1+\al}}\sqrt{1-r^2}}{-u_0^*},\]
where $F_*=\lt(\frac{\s_n}{\s_{n-k}}\rt)^{\frac{1}{k}}.$ This is equivalent to,  on $\p U_r\times(0, T],$
\[\frac{\s_n}{\s_{n-k}}=[(1+\al)\tilde{t}]^{\frac{k}{1+\al}}\lt(\frac{\sqrt{1-r^2}}{-u_0^*}\rt)^{\frac{k}{\al}}.\]
We will adapt the idea of \cite{tru} to prove this Lemma.
Recall the formula (2.5) in \cite{tru}, we have
\[
\begin{aligned}
\s_k(\ka[\Lambda_{ij}])&=\s_{k-1}(\ka[\Lambda_{\al\beta}])
\Lambda_{nn}+\s_k(\ka[\Lambda_{\al\beta}])-\sum\limits_{\gamma=1}^{n-1}\s_{k-2}(\ka[\Lambda_{\al\beta}|\Lambda_{\gamma\gamma}])\Lambda^2_{\gamma n},
\end{aligned}
\]
where $1\leq i, j\leq n$ and $1\leq\al, \beta, \gamma\leq n-1.$
We define
$$A_{k-1}=\s_{k-1}(\ka[\Lambda_{\al\beta}]) \text{ and } \ \ B_k=\s_k(\ka[\Lambda_{\al\beta}])-\sum\limits_{\gamma=1}^{n-1}\s_{k-2}(\ka[\Lambda_{\al\beta}|\Lambda_{\gamma\gamma}])\Lambda^2_{\gamma n}.
$$
Therefore, we obtain on $\p U_r\times(0, T]$
\begin{eqnarray}\label{eq10}
\frac{\s_n}{\s_{n-k}}=\frac{A_{n-1}\Lambda_{nn}+B_n}{A_{n-k-1}\Lambda_{nn}+B_{n-k}}
=[(1+\al)\tilde{t}]^{\frac{k}{1+\al}}\lt(\frac{\sqrt{1-r^2}}{-u_0^*}\rt)^{\frac{k}{\al}}.
\end{eqnarray}
Now, we prove
\be\label{sap5.6}
\frac{A_{n-1}}{A_{n-k-1}}\geq\frac{\s_n}{\s_{n-k}}=\frac{A_{n-1}\Lambda_{nn}+B_n}{A_{n-k-1}\Lambda_{nn}+B_{n-k}},
\ee
which is
equivalent to the following inequality
\[
\begin{aligned}
\s_{n-1}(\ka')\lt[\s_{n-k}(\ka')-\sum\limits_{\gamma=1}^{n-1}\s_{n-k-2}(\ka'|\gamma)\Lambda^2_{\gamma n}\rt]\geq\s_{n-k-1}(\ka')\lt[-\sum\limits_{\gamma=1}^{n-1}\s_{n-2}(\ka'|\gamma)\Lambda^2_{\gamma n}\rt],\\
\end{aligned}
\]
where $\ka'=\ka'[\Lambda_{\al\beta}]$
are eigenvalues of $(\Lambda_{\al\beta}).$
For any fixed $\gamma$, we have
\[
\begin{aligned}
\s_{n-k-1}(\ka')\s_{n-2}(\ka'|\gamma)&=[\s_{n-k-1}(\ka'|\gamma)+\s_{n-k-2}(\ka'|\gamma)\ka_\gamma]\s_{n-2}(\ka'|\gamma)\\
&>\s_{n-1}(\ka')\s_{n-k-2}(\ka'|\gamma),
\end{aligned}
\]
which gives \eqref{sap5.6} directly.

In the rest of this proof, we will denote $f=\lt(\frac{\s_{n-1}(\ka')}{\s_{n-k-1}(\ka')}\rt)^{\frac{1}{k}}.$ In view of \eqref{sap5.6} we get, on $\p U_r\times(0, T]$
\[f\geq[(1+\al)\tilde{t}]^{\frac{1}{1+\al}}\lt(\frac{\sqrt{1-r^2}}{-u_0^*}\rt)^{\frac{1}{\al}}.\]
Now consider the following function defined on $\p U_r\times[0, T],$
\[d(x, t):=f-[(1+\al)\tilde{t}]^{\frac{1}{1+\al}}\lt(\frac{\sqrt{1-r^2}}{-u_0^*}\rt)^{\frac{1}{\al}}.\]
By our assumption \eqref{Cond} on $u_0$, we derive
\[\frac{\s_n}{\s_{n-k}}(w^*\gas_{ik}(u_0^*(\xi))_{kl}\gas_{lj})>\lt(\frac{\sqrt{1-r^2}}{-u_0^*(\xi)}\rt)^{\frac{k}{\al}}
\,\,\mbox{for $\xi\in\p B_r,$}\]
which implies $d(x, 0)\geq c>0$ on $\p U_r\times\{0\}.$
Therefore, we may  assume
$d(x, t)$ achieves its minimum at the point $(y, t_0)\in\p U_r\times(0, T].$ If not, we would have $d\geq c$ on $\p U_r\times [0,T].$ Then in view of \eqref{sap5.5} we would derive that $\Lambda_{nn}$ has an uniform upper bound on $\p U_r\times [0,T].$ The Lemma would follow easily.

 Note that at any $(x, t)\in\p U_r\times(0, T]$, for $\al,\beta<n$, we have
\be\label{sap5.7}
\bn_{\al\beta}(v-\ubar{v})=-\bn_n(v-\ubar{v})\rho_{\al\beta},
\ee
where $\rho_{\al\beta}$ is the second fundamental form of $\p U_r$ with respect the $\tau_n$.
For a symmetric matrix $r=(r_{\al\beta})$ with eigenvalues $\lambda_1,\cdots, \lambda_{n-1},$
let's define $G(r)=f(\lambda_1, \cdots,\lambda_{n-1})=\lt(\frac{\s_{n-1}(\lambda)}{\s_{n-k-1}(\lambda)}\rt)^{\frac{1}{k}},$
$G^{\al\beta}=\frac{\p G}{\p r_{\al\beta}},$ and $G_0^{\al\beta}=G^{\al\beta}\big|_{\Lambda_{\al\beta}(y, t_0)}.$

By the concavity of $f$, we have
\[G_0^{\al\beta}\Lambda_{\al\beta}(x, t)\geq G(\Lambda_{\al\beta}(x, t)),\,\, \text{and}\,\,  G_0^{\al\beta}\ubar{\Lambda}_{\al\beta}(x, t)\geq G(\ubar{\Lambda}_{\al\beta}(x, t)).\]
Therefore, for all $(x, t)\in\p U_r\times[0, T]$ we have
\[G^{\al\beta}_0\Lambda_{\al\beta}(x, t)-[(1+\al)\tilde{t}]^{\frac{1}{1+\al}}\lt(\frac{\sqrt{1-r^2}}{-u_0^*}\rt)^\frac{1}{\al}
\geq d(x, t)\geq d(y, t_0).\]
When $(x, t)\in\p U_r\times[0, T],$ using the above inequity we get,
\be\label{sap5.8}
\begin{aligned}
&G_0^{\al\beta}\bn_n\lt(v-\ubar{v}\rt)\rho_{\al\beta}\\
=&G^{\al\beta}_0\bn_{\al\beta}\lt[\ubar{v}(x, t)-v(x, t)\rt]\\
=&G_0^{\al\beta}[\bn_{\al\beta}\ubar{v}(x, t)-\ubar{v}(x, t)\delta_{\al\beta}]-G_0^{\al\beta}[\bn_{\al\beta}v(x, t)-v(x, t)\delta_{\al\beta}]\\
\leq& G_0^{\al\beta}\ubar{\Lambda}_{\al\beta}(x, t)-d(y, t_0)-[(1+\al)\tilde{t}]^{\frac{1}{1+\al}}\lt(\frac{\sqrt{1-r^2}}{-u_0^*}\rt)^{\frac{1}{\al}}.
\end{aligned}
\ee

Suppose $U_{r\delta}:=\lt\{x\in U_r\big|\frac{1}{\sqrt{1-r^2}}-x_{n+1}<\delta\rt\}$ and $\Gamma_\theta:=\lt\{x\in U_r\big|\frac{1}{\sqrt{1-r^2}}-x_{n+1}=\theta\rt\}$ for $0\leq \theta\leq \delta$. Now we extend  $\tau_1,\tau_2,\cdots,\tau_{n}$ to $U_{r\delta}$ such that it is still an orthonormal frame and the first $n-1$ vectors are the tangential vectors of $\Gamma_{\theta}$, $\tau_n$ is the interior normal of $\Gamma_{\theta}$. Therefore, we can extend $\rho_{\alpha\beta}$ to be the second fundamental form of $\Gamma_{\theta}$. Thus, it is clear that there exists $c_1>0$ such that
\[G_0^{\al\beta}\rho_{\al\beta}(x)\geq c_1\,\,\mbox{in $\bar{U}_{r\delta}\times[0, T]$.}\]
By \eqref{sap5.8}, we get, on $\p U_r\times[0,T]$,
\[\bn_n(v-\ubar{v})\leq\frac{G_0^{\al\beta}\ubar{\Lambda}_{\al\beta}(x, t)-d(y, t_0)-[(1+\al)\tilde{t}]^{\frac{1}{1+\al}}\lt(\frac{\sqrt{1-r^2}}{-u_0^*}\rt)^\frac{1}{\al}}
{G_0^{\al\beta}\rho_{\al\beta}(x)}.\]
We denote the right hand side of the above inequality by $\psi(x, t).$  Then we define
$$\phi:=\bn_n(v-\ubar{v})-\psi-C\left(\frac{1}{\sqrt{1-r^2}}-x_{n+1}\right) \,\,\mbox{ on }\bar{U}_{r\delta}\times[0, T].$$
It is obvious that $\phi(y, t_0)=0,$ and $\phi\leq 0$ on $\p U_r\times[0,T]$.
We use the coordinate $x_1,x_2,\cdots,x_n$ for the hyperboloid. Thus, $U_r$ is the $n-1$-dimensional ball $\mathcal B_{n-1}\subset\mathbb{H}^n$ defined by$|x|\leq\frac{r}{\sqrt{1-r^2}}$. For any $x\in \mathcal B_{n-1}$, we suppose $p$ is the point on $\p \mathcal B_{n-1}$ such that
$\text{dist}(x, \p\mathcal B_{n-1})=\text{dist}(x, p)$. Here, $\text{dist}(\cdot, \cdot)$ is the Euclidean distance function. We further let
$$\tilde{\psi}=\psi-\bn_n(v-\ubar{v}).$$
Thus, we get
$$-\tilde{\psi}(x,0)\leq\tilde{\psi}(p,0)-\tilde{\psi}(x,0)\leq |D\tilde{\psi}(\cdot,0)||x-p|\leq C_0(r_0-|x|),$$ where $r_0=\frac{r}{\sqrt{1-r^2}}$ and $C_0>0$ is a constant depending on $U_r, \M_0$. On the other hand, we have
$$\frac{1}{\sqrt{1-r^2}}-x_{n+1}=\frac{(r_0-|x|)(r_0+|x|)}{\frac{1}{\sqrt{1-r^2}}+x_{n+1}}\geq \frac{r}{2}(r_0-|x|).$$
Therefore, we can choose a sufficiently large $C>0$  such that on $U_r\times\{0\},$ $\phi \leq 0;$ and
on $\Gamma_{\delta}\times[0,T],$ $\phi<0.$

Differentiating the flow equation $v_t+\tilde{F}^{-1}(v_{ij}-v\delta_{ij})=0$ with respect to $\tau_n$, we get
\[v_{nt}-\tilde{F}_v^{-2}F_v^{ij}(v_{nij}-v_n\delta_{ij})=0.\]
This yields $\lt|\mathfrak{L}\phi\rt|\leq C_7\tilde{F}_v^{-2}\sum_i\tilde{F}_v^{ii},$ where $C_7$ depends on $r, \M_0,$ and $T.$
Therefore, if we choose sufficiently large $B$ in the definition of $h$, we conclude
\[\phi\leq h\,\,\mbox{in $\bar{U}_r\times[0, T],$}\]
which implies $\bn_{nn}v(y, t_0)<C_8.$ Thus, there exists a constant $C_9>0$ such that $d(x, t)\geq C_9$ on $\p U_r\times[0, T],$ which in turn gives
$\bn_{nn}v(x, t)<C_10$ on $\p U_r\times [0, T].$ Thus we prove that $\bn_{nn}v$ is bounded from above on the parabolic boundary. The lower bound for
$\bn_{nn}v$ comes from the matrix $(\Lambda_{ij})$ is positive definite. This completes the proof of this Lemma.
\end{proof}

The global $C^2$ estimates for $u_r^*$- solution of \eqref{sap1.1} follows from Lemma 20 in \cite{WX212} directly. Therefore, we have proved the solvability of the approximate problem
\eqref{sap1.1}.

\section{Local estimates for $u_r$}
\label{le}
In this section, we will establish local estimates for $u_r,$ the Legendre transform of  $u^*_r$-solution of \eqref{sap1.1}.
\subsection{Local $C^0$ estimates for $u^*_r$}
\label{lc2}
By Lemma \ref{c0-lem}, we have $\tbus<u_r^*<\tlus,$ where $\tbus=[(1+\al)\tilde{t}]^{\frac{1}{1+\al}}\bar{u}^{*}$
and $\tlus=[(1+\al)\tilde{t}]^{\frac{1}{1+\al}}\ubar{u}^{*}.$ By Lemma 13 in \cite{WX20}, we get
\[[(1+\al)\tilde{t}]^{\frac{1}{1+\al}}\ubar{u}\lt(\frac{x}{[(1+\al)\tilde{t}]^{\frac{1}{1+\al}}}\rt)
<u_r(x,t)<[(1+\al)\tilde{t}]^{\frac{1}{1+\al}}\bar{u}\lt(\frac{x}{[(1+\al)\tilde{t}]^{\frac{1}{1+\al}}}\rt),\]
which is a local $C^0$ estimate for $u_r$.

\subsection{Local $C^1$ estimates}
\label{sub-loc-c1}
We introduce a new subsolution $\lu_1$ satisfying
\[\s_k^{\frac{\al}{k}}(\ka[\M_{\lu_1}(x)]=-10\lt<X_{\lu_1}, \nu_{\lu_1}\rt>\]
and as $|x|\goto\infty$
$$\lu_1\goto |x|+\varphi\left(\frac{x}{|x|}\right).$$
By the strong maximum principle we have, when $x\in\R^n$
$$\lu_1(x)<\lu(x). $$
We let
$$\tilde{\lu}_1(x,t)=A(t)\lu_1\left(\frac{x}{A(t)}\right), \ \ \tilde{\lu}(x,t)=A(t)\lu\left(\frac{x}{A(t)}\right), \ \ \text{and}\ \  \tilde{\uu}(x,t)=A(t)u_0\left(\frac{x}{A(t)}\right),$$
where $A(t)=[(1+\al)\td{t}]^{\frac{1}{1+\al}}\geq 1.$
Moreover, for any compact convex domain $K$ and positive constant $T$, let
$$2\delta=\min_{K\times[0,T]}\left(\tilde{\lu}-\tilde{\lu}_1\right).$$
 We define a spacelike function $\Psi=\tilde{\lu}_1+\delta$. Denote $\Omega=\{(x,t)\in\mathbb{R}^n\times [0,T]; \Psi\leq \tilde{\uu}\}$. It is clear that $K\times[0,T]\subset\Omega$. Since as $|x|\goto\infty,$ $\tilde{\lu}_1-\tilde{\uu}\goto 0$, we know that $\Omega$ is a compact set only depending on $K$ and $T$. Applying Lemma 5.1 of \cite{BS09},
if $\Omega\subset\bigcup_{t\in[0,T]}\Omega_r(t),$ then we have the gradient estimate:
\[\sup_{K\times[0,T]}\frac{1}{\sqrt{1-|Du_r|^2}}\leq\frac{1}{\delta}\sup\limits_{\Omega}\frac{\tilde{\bar{u}}-\Psi}{\sqrt{1-|D\Psi|^2}}.\]

\subsection{Local estimates for $F$}
 \label{sub-loc-F}Due to the complication of the local $C^2$ estimates, in this subsection we need to establish the local estimates for $F.$ More precisely, we want to bound $F$ from above and below in terms of $u=-\lt<X, e_{n+1}\rt>$ and $v=-\lt<\nu, e_{n+1}\rt>.$ By Subsection \ref{sub-loc-c1}, we know, this is equivalent to bounding $F$ from above and below by the height function $u.$
For our convenience, we will denote $\Phi=F^\al,$ where $F=\s_k^{\frac{1}{k}}$.
A straightforward calculation yields the following Lemma.
\begin{lemm}\label{evolution equation lem}
Under the flow \eqref{int1} we have
\be\label{eel1}
\mathfrak L u=(1-\al)\Phi v,
\ee
\be\label{eel2}
\mathfrak L v=-\sum\limits_i\Phi^{ii}\ka_i^2 v,
\ee
\be\label{eel3}
\mathfrak L \Phi=-\sum\limits_i\Phi^{ii}\ka_i^2\Phi,
\ee
and
\be\label{eel4}
\mathfrak L h_i^j=(\al-1)\Phi\sum \limits_k h_i^kh_k^j-h_i^j\sum\limits_{k, l, s}\Phi^{kl}h_k^sh_s^l+\sum\limits_{p, q, r, s}\Phi^{pq, rs}\bn_i h_{pq}\bn^j h_{rs}.
\ee
Here, $\mathfrak L=\p_t-\Phi^{ij}\bn_{ij}.$
\end{lemm}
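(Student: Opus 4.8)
## Proof proposal for Lemma \ref{evolution equation lem}

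The plan is to compute each of the four evolution equations by starting from the general evolution equations for geometric quantities under a curvature flow of the form $\partial_t X = F^\al \nu = \Phi\,\nu$, specialized to the Lorentzian (Minkowski) setting, and then use the particular structure of $\Phi = \sigma_k^{\al/k}$ together with the flow equation to simplify. Throughout I will use that $\nu$ is the future-directed timelike unit normal with $\langle\nu,\nu\rangle = -1$, that $e_{n+1}$ is a fixed timelike translation field, and that $u = -\langle X, e_{n+1}\rangle$, $v = -\langle\nu, e_{n+1}\rangle$ are the height and the tilt functions. The operator is $\mathfrak L = \partial_t - \Phi^{ij}\bn_{ij}$, where $\bn$ is the induced Levi-Civita connection on $\M_t$ and $\Phi^{ij} = \partial\Phi/\partial h_{ij}$.

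First I would record the basic variational formulas under $\partial_t X = \Phi\nu$ in Minkowski space: $\partial_t g_{ij} = 2\Phi h_{ij}$ (sign conventions as in the spacelike graph setting), $\partial_t \nu = \bn\Phi$ (the gradient being with respect to $g_{ij}$, and with a sign dictated by $\langle\nu,\nu\rangle=-1$), and the Simons-type identity $\partial_t h_i^j = \bn^j\bn_i\Phi + \Phi(\text{curvature terms})$; in Minkowski space the ambient curvature vanishes, so the only quadratic correction is the one coming from the Gauss equation, yielding $\partial_t h_i^j = \bn^j\bn_i\Phi - \Phi\, h_i^k h_k^j$ — note the sign is opposite to the Euclidean case because of the Lorentzian signature. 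For \eqref{eel1}: since $\bn_i u = -\langle\partial_i X, e_{n+1}\rangle$ and $\bn_{ij}u = -\Phi^{-1}\!\cdot$(not needed) — more directly, $\partial_t u = -\langle\Phi\nu, e_{n+1}\rangle = \Phi v$, while $\bn_{ij}u = h_{ij}\langle\nu,e_{n+1}\rangle = -h_{ij}v$ (the Gauss formula for the graph function), so $\Phi^{ij}\bn_{ij}u = -v\,\Phi^{ij}h_{ij} = -\al\Phi v$ by Euler's identity for the $\al$-homogeneous function $\Phi$. Hence $\mathfrak L u = \Phi v - (-\al\Phi v) = \dots$ — here I must be careful: the stated answer is $(1-\al)\Phi v$, so in fact $\partial_t u = \Phi v$ is wrong by the setup and the correct contribution is $\partial_t u$ contributes $+\Phi v$ and the Hessian term contributes $-\al\Phi v$ giving $(1-\al)\Phi v$ only if $\Phi^{ij}\bn_{ij}u = \al\Phi v$; I would pin down the sign of $\bn_{ij}u$ from Lemma 15 of \cite{WX20} or a direct computation so that the bookkeeping matches. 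For \eqref{eel2}: $\partial_t v = -\langle\bn\Phi, e_{n+1}\rangle = -\bn^i\Phi\,\bn_i(\langle X,e_{n+1}\rangle)$-type term, and combined with $\bn_{ij}v$ (which by the Codazzi/Weingarten relations equals $\bn_i h_{jk}\cdot(\text{tangential part of }e_{n+1}) + h_i^k h_{kj} v$), the first-derivative terms cancel against $\partial_t v$ and one is left with $\mathfrak L v = -v\sum_i\Phi^{ii}\ka_i^2$ in an eigenbasis of the Weingarten map; I would diagonalize $h_i^j$ at a point to write $\Phi^{kl}h_k^s h_s^l = \sum_i \Phi^{ii}\ka_i^2$.

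For \eqref{eel3}: differentiate $\Phi = \Phi(h_i^j)$ along the flow, $\partial_t\Phi = \Phi^{ij}\partial_t h_{ij}$, plug in the Simons-type formula for $\partial_t h_{ij}$, and observe $\Phi^{ij}\bn_{ij}\Phi = \bn_{ij}(\Phi)\Phi^{ij}$ handles the Laplacian-type term, leaving $\mathfrak L\Phi = \Phi^{ij}(-\Phi h_i^k h_{kj}) = -\Phi\sum_i\Phi^{ii}\ka_i^2$ after diagonalization — note here the homogeneity of $\Phi$ does not directly enter, only the quadratic term from the Gauss equation, and this is the cleanest of the four. For \eqref{eel4}: this is the tensorial evolution of the Weingarten map; starting from $\partial_t h_i^j = \bn^j\bn_i\Phi - \Phi h_i^k h_k^j$ and expanding $\bn^j\bn_i\Phi = \Phi^{pq}\bn^j\bn_i h_{pq} + \Phi^{pq,rs}\bn_i h_{pq}\bn^j h_{rs}$, I would commute covariant derivatives (Ricci/Simons identity) to convert $\Phi^{pq}\bn^j\bn_i h_{pq}$ into $\Phi^{pq}\bn_{pq}h_i^j$ plus commutator terms that are quadratic in $h$; using the Gauss equation to express the ambient-flat curvature, these commutator terms combine with the $-\Phi h_i^k h_k^j$ term. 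Tracking the signs in the Lorentzian setting, the net quadratic-in-$h$ contribution is $(\al-1)\Phi\sum_k h_i^k h_k^j - h_i^j\sum\Phi^{kl}h_k^s h_s^l$, where the factor $(\al-1)$ emerges from combining the $-\Phi h_i^k h_k^j$ with the commutator term $\Phi^{kl}h_{kl}h_i^m h_m^j$-type expression via Euler's identity $\Phi^{kl}h_{kl} = \al\Phi$.

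The main obstacle I expect is sign-bookkeeping in the Lorentzian signature: the Gauss equation, the Weingarten relations, and the variation $\partial_t\nu$ all pick up signs relative to the Euclidean case because $\langle\nu,\nu\rangle=-1$ and the second fundamental form of a spacelike graph satisfies the "wrong-sign" Codazzi/Gauss identities. In particular, getting the coefficient in \eqref{eel1} to come out as $(1-\al)$ rather than $(\al-1)$, and the coefficient in \eqref{eel4} as $(\al-1)$, requires consistent conventions for $h_{ij}$, $\nu$, and $\bn_{ij}u$; I would fix these once (matching Lemma 15 of \cite{WX20}) and then the four identities follow from the standard Euclidean computations with the ambient curvature set to zero and the appropriate sign flips inserted. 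The commutator-of-covariant-derivatives step in \eqref{eel4} is the most computation-heavy, but it is routine once the conventions are nailed down.
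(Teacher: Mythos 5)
The paper offers no proof here; it simply states ``A straightforward calculation yields the following Lemma,'' so there is nothing in the source to compare against beyond the standard evolution-equation computation, which is exactly what you outline. Your plan is correct: compute $\partial_t g_{ij}=2\Phi h_{ij}$, $\partial_t\nu=\bn\Phi$, $\partial_t h_i^j=\bn_i^{\,j}\Phi-\Phi h_i^kh_k^j$, then apply Codazzi/Ricci (with the Lorentzian Gauss equation $R_{ijkl}=h_{il}h_{jk}-h_{ik}h_{jl}$) and Euler's relation $\Phi^{ij}h_{ij}=\al\Phi$ to assemble \eqref{eel1}--\eqref{eel4}. In particular, the mechanism you give for \eqref{eel4} is right: the $(\al-1)$ coefficient comes from the $-\Phi h_i^kh_k^j$ in $\partial_t h_i^j$ (which already absorbs the $-2\Phi h^{jk}h_{ik}$ from $\partial_tg^{jk}$) plus the $+\al\Phi h_i^kh_k^j$ produced by the Simons commutator via Euler.

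The one thing you leave genuinely unresolved is the sign in \eqref{eel1}, and it is worth nailing down since it is symptomatic of the Lorentzian bookkeeping throughout. You write $\bn_{ij}u=h_{ij}\langle\nu,e_{n+1}\rangle=-h_{ij}v$, but this drops the minus sign from $u=-\langle X,e_{n+1}\rangle$. With the Gauss formula $\bn_{ij}X=h_{ij}\nu$ (the convention under which a strictly convex spacelike graph has $h_{ij}>0$), one has
\[
\bn_{ij}u=-\langle\bn_{ij}X,e_{n+1}\rangle=-h_{ij}\langle\nu,e_{n+1}\rangle=h_{ij}v,
\]
so $\Phi^{ij}\bn_{ij}u=\al\Phi v$ and $\mathfrak L u=\Phi v-\al\Phi v=(1-\al)\Phi v$, with no inconsistency. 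The same fixed conventions ($u=-\langle X,e_{n+1}\rangle$, $v=-\langle\nu,e_{n+1}\rangle$, $\bn_{ij}X=h_{ij}\nu$, $\bar\nabla_i\nu=h_i^kX_k$) then close \eqref{eel2} and \eqref{eel3} by exactly the cancellations you describe, and \eqref{eel4} by your Simons argument; note also that in \eqref{eel3} the chain rule must be taken with respect to $h_i^j$ (the Weingarten map), i.e.\ $\partial_t\Phi=\Phi^{ij}\partial_t h_i^j$, for the signs to land correctly, which is what your use of $\partial_t h_i^j=\bn_i^{\,j}\Phi-\Phi h_i^kh_k^j$ implicitly does.
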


Recall Lemma \ref{sap-lem3.1}, since $\tilde{F}=\Phi^{-1}$  we have $\Phi< C_2 v.$ This leads to a local upper bound of $F$ directly.

In order to obtain the local lower bound of $F$, we consider the following test function,
$$\varphi=\gamma\log(c-t-u)-\log\Phi+Av,$$
where $\gamma,A$ are two positive constants to be determined later and $c>0$ is an arbitrary positive constant.
At the maximal value point $(x_0,t_0)$ of $\varphi$, we have
\begin{eqnarray}\label{eqq1}
0=\varphi_i=\gamma\frac{-u_i}{c-t-u}-\frac{\Phi_i}{\Phi}+Av_i,
\end{eqnarray}
and
\begin{eqnarray}
0\geq\varphi_{ii}=\gamma\frac{-u_{ii}}{c-t-u}-\gamma\frac{u_i^2}{(c-t-u)^2}-\frac{\Phi_{ii}}{\Phi}+\frac{\Phi_i^2}{\Phi^2}+Av_{ii}.\nonumber
\end{eqnarray}
Therefore, we get
\begin{eqnarray}\label{eqq2}
\mathfrak{L}(\varphi)&=&\gamma\frac{-1-\mathfrak{L}u}{c-t-u}+\gamma\frac{\Phi^{ii}u_i^2}{(c-t-u)^2}-\frac{\mathfrak{L}\Phi}{\Phi}-\Phi^{ii}\frac{\Phi_i^2}{\Phi^2}+A\mathcal{L}v\\
&=&\gamma\frac{-1+(\al-1)\Phi v}{c-t-u}+\gamma\frac{\Phi^{ii}u_i^2}{(c-t-u)^2}-\Phi^{ii}\frac{\Phi_i^2}{\Phi^2}-(Av-1)\Phi^{ii}\kappa_i^2.\nonumber
\end{eqnarray}
By \eqref{eqq1}, we have
$$\gamma\frac{\Phi^{ii}u_i^2}{(c-t-u)^2}\leq \frac{2}{\gamma}\frac{\Phi^{ii}\Phi_i^2}{\Phi^2}+\frac{2A^2}{\gamma}\Phi^{ii}\kappa_i^2u_i^2\leq \frac{2}{\gamma}\frac{\Phi^{ii}\Phi_i^2}{\Phi^2}+\frac{2A^2v^2}{\gamma}\Phi^{ii}\kappa_i^2,$$
where we have used $\sum\limits_iu_i^2=v^2-1<v^2.$
Combining with \eqref{eqq2}, we obtain at $(x_0, t_0)$
\begin{eqnarray}\label{eqq3}
0&\leq&\gamma\frac{-1+(\al-1)\Phi v}{c-t-u}-\left(Av-1-\frac{2A^2v^2}{\gamma}\right)\Phi^{ii}\kappa_i^2-\left(1-\frac{2}{\gamma}\right)\frac{\Phi^{ii}\Phi_{i}^2}{\Phi^2}\nonumber.
\end{eqnarray}
When $A=2$ and $\gamma$ is chosen so large that
$$\gamma\geq4+2A^2v^2, $$ we get
$$(\al-1)\Phi v\geq 1.$$
This leads to the desired estimate.
We conclude
\begin{lemm}
\label{loc-F-lem}
Let $u_r^*$ be the solution of \eqref{sap1.1} and $u_r$ be the Legendre transform of $u_r^*.$ For any $c>0,$ denote $K:=\{(x, t)\mid u_r(x, t)+t\leq c\}$
and $V_0:=\max\limits_{(x, t)\in K}v.$ Then we have
\[\lt(\frac{c-t-u}{c}\rt)^\gamma e^{2(v-V_0)}\frac{1}{(\al-1)V_0}\leq\Phi<C_2V_0,\]
where $C_2=C_2(u_0^*)$ is determined by Lemma \ref{sap-lem3.1} and $\gamma=4+8V_0^2.$ We note that here $c$ is always chosen such that
$K\subset\bigcup_{t\in[0, \infty)} \lt(Du_r^*(B_r, t)\times\{t\}\rt).$
\end{lemm}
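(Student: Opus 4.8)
The plan is to prove the two inequalities separately; the upper bound is essentially free, and the lower bound comes from a maximum-principle argument on a logarithmic auxiliary quantity.

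For the right-hand inequality, recall from Lemma~\ref{sap-lem3.1} that $\tilde F = \Phi^{-1}$ obeys $\tilde F > (C_2 x_{n+1})^{-1}$, and under the projection/Legendre correspondence of Subsection~\ref{sub-loc-F} the hyperboloid height $x_{n+1}$ becomes $v = -\lt<\nu, e_{n+1}\rt>$; thus $\Phi < C_2 v \leq C_2 V_0$ on $K$, with $C_2 = C_2(u_0^*)$ exactly as in Lemma~\ref{sap-lem3.1}. Nothing more is needed here.

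For the lower bound I would work with $\varphi = \gamma\log(c - t - u) - \log\Phi + Av$ on $K$, taking $A = 2$ and $\gamma = 4 + 8V_0^2$. The preliminary observations are that $K$ is compact -- the local $C^0$ estimate of Subsection~\ref{lc2} makes $u_r\to\infty$ as $|x|\to\infty$ and makes $u_r + t$ coercive in $t$ -- and that $v$ is bounded on $K$ by the gradient estimate of Subsection~\ref{sub-loc-c1}, so $V_0 < \infty$ and $\varphi$ attains a maximum on $K$; since $\varphi\to-\infty$ along the face $\{u + t = c\}$, this maximum lies either on $\{t=0\}$ or at an interior spacetime point $(x_0,t_0)$. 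At an interior maximizer $\mathfrak L\varphi\geq 0$, and I would then plug in the evolution equations \eqref{eel1}--\eqref{eel3} to get the identity \eqref{eqq2}, use the first-order relation \eqref{eqq1} together with $v_i^2 = \kappa_i^2 u_i^2$ (in a frame at $(x_0,t_0)$ diagonalizing the shape operator, so that $(\Phi^{ij})$ is diagonal) and $\sum_i u_i^2 = v^2 - 1 < v^2$ to absorb $\gamma\Phi^{ii}u_i^2/(c-t-u)^2$, arriving at \eqref{eqq3}. With $A = 2$ and $\gamma = 4 + 8V_0^2$ the coefficients $1 - 2/\gamma$ and $Av - 1 - 2A^2v^2/\gamma$ are nonnegative for all admissible $v$ (here $v\geq 1$), so the corresponding terms in \eqref{eqq3} are nonpositive and \eqref{eqq3} forces $0\leq\gamma\frac{-1 + (\alpha-1)\Phi v}{c - t - u}$; as $c - t - u > 0$ at an interior point this yields $(\alpha-1)\Phi v\geq 1$, hence $\Phi\geq\frac{1}{(\alpha-1)V_0}$, at $(x_0,t_0)$.

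Finally I would pass from the value at the maximizer to a pointwise bound: evaluating $\varphi$ at $(x_0,t_0)$ with $\Phi(x_0,t_0)\geq\frac{1}{(\alpha-1)V_0}$, $v\leq V_0$ and $c - t - u\leq c$ gives $\max_K\varphi\leq\gamma\log c + \log((\alpha-1)V_0) + 2V_0$, and exponentiating $\varphi(x,t)\leq\max_K\varphi$ rearranges precisely into $\lt(\frac{c - t - u}{c}\rt)^\gamma e^{2(v - V_0)}\frac{1}{(\alpha-1)V_0}\leq\Phi$. If the maximum of $\varphi$ instead lies on $\{t=0\}$, I would bound $\varphi$ there directly using the initial pinching $\Phi(\cdot,0) = \sigma_k^{\frac{\al}{k}}(\ka[\M_0]) > c_0$ from \eqref{Cond}, which only improves the conclusion. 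The hard part will be the interior-maximum computation: making \eqref{eqq2} exact with all the Levi-Civita terms, correctly reducing $v_i$ to $\kappa_i u_i$ in the eigenframe, and checking that the specific constants $A = 2$, $\gamma = 4 + 8V_0^2$ really force both ``bad'' terms to be nonpositive for every $v\geq 1$; once that is done, compactness of $K$, the boundary behavior of $\varphi$, the $t = 0$ case, and the final exponentiation are all routine. It is worth noting that $\alpha > 1$ is used essentially in the last step, so this lower bound degenerates when $\alpha = 1$.
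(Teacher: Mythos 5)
Your proposal reproduces the paper's proof essentially verbatim: the upper bound $\Phi < C_2 v \leq C_2 V_0$ from Lemma~\ref{sap-lem3.1} (with $x_{n+1}$ identified with $v$), and the lower bound via the parabolic maximum principle on $\varphi = \gamma\log(c-t-u) - \log\Phi + Av$ with $A=2$, $\gamma = 4+8V_0^2$, using \eqref{eqq1} together with $v_i = \kappa_i u_i$ in the eigenframe and $\sum_i u_i^2 < v^2$ to arrive at \eqref{eqq3}, then checking that both curvature terms are nonpositive and exponentiating $\varphi \leq \max_K\varphi$. Your two extra remarks are correct and sharper than the printed argument, which silently assumes an interior parabolic maximum and takes $\al>1$ for granted --- though note that if the maximum does sit on $\{t=0\}$, the pinching \eqref{Cond} replaces the constant $1/((\al-1)V_0)$ by $c_0$, which ``only improves the conclusion'' precisely when $c_0 \geq 1/((\al-1)V_0)$.
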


\subsection{Local $C^2$ estimates}
\label{lc2}
In this subsection, we will establish the local $C^2$ estimate for $u_r.$
\begin{lemm}
\label{loc-c2-lem}
Let $u_r^*$ be the solution of \eqref{sap1.1} and $u_r$ be the Legendre transform of $u_r^*.$ Denote $\Omega_r(t):=Du_r^*(B_r, t).$ For any given $c>0,$ let $r_c\in (0, 1)$ such that when $r>r_c,$ $u_r(\cdot, t)|_{\p\Omega_r(t)}>c$ for all $t\in[0, \infty).$ Then for $r>r_c$ we have
\[(c-u_r)^m\log\ka_{\max}(x, t)\leq C,\]
where $\ka_{\max}(x, t)$ is the largest principal curvature of $\M_{u_r}$ at $(x, t),$
$m$ is a large constant only depending on $k,$ and $C:=C(\Phi, c)>0$ is independent of $r.$
\end{lemm}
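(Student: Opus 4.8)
The plan is to run a standard maximum‑principle argument on a carefully chosen test function that combines the largest principal curvature $\ka_{\max}$, a cutoff factor built from the height function $u_r$, and the auxiliary quantity $v=-\lt<\nu,e_{n+1}\rt>$, using the evolution equations from Lemma \ref{evolution equation lem} together with the two‑sided bound on $\Phi$ from Lemma \ref{loc-F-lem}. Since $\ka_{\max}$ is only Lipschitz, I would instead work with $\log P$, where $P=\log\lambda_{\max}(h^j_i)$ is replaced by the usual smooth proxy obtained by maximizing over unit vectors, i.e. consider at the maximum point the largest eigenvalue with a fixed eigenvector and differentiate as if $h_{11}$ were the largest principal curvature. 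Concretely, set
\[
\phi=(c-u_r)^m\log h_1^1 + a\log v + b u_r,
\]
for constants $m,a,b>0$ to be fixed, and assume $\phi$ attains an interior spatial–temporal maximum at $(x_0,t_0)$ with $h_1^1$ large (otherwise there is nothing to prove). Note $c-u_r>0$ on $K$ by the choice of $r_c$, and $v$, $\Phi$, $u_r$ are all under control on $K$ by Subsection \ref{sub-loc-c1} and Lemma \ref{loc-F-lem}; the degenerate lower bound on $\Phi$ is exactly what prevents the third‑order terms from being uncontrollable.

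The key steps, in order: (i) Compute $\mathfrak L\phi$ at $(x_0,t_0)$ using \eqref{eel4} for $\mathfrak L h_1^1$, \eqref{eel2} for $\mathfrak L v$, and \eqref{eel1} for $\mathfrak L u_r$; the concavity of $F=\s_k^{1/k}$ (hence of $\Phi^{1/\al}$) gives the crucial sign on the term $\sum \Phi^{pq,rs}\bn_1 h_{pq}\bn^1 h_{rs}$, which after the standard manipulation (splitting indices, Codazzi, and absorbing) is bounded above by $C\Phi^{ii}(h_1^1)^{-1}(\bn_i h_{11})^2$ plus good negative terms. (ii) Use the first‑order condition $\phi_i=0$ to replace $\bn_i h_{11}/h_1^1$ by $-m(c-u_r)^{-1}(u_r)_i(\text{factor})-a v_i/v - b(u_r)_i$, square it, and feed it back; this is where the factor $(c-u_r)^m$ must be chosen with $m$ large enough (depending only on $k$) so that the bad gradient term coming from differentiating the cutoff is dominated by the good term $-\sum\Phi^{ii}\ka_i^2 h_1^1$ coming from \eqref{eel4} and \eqref{eel2}. (iii) Handle the term $\sum_k \Phi \, h_1^k h_k^1=\Phi (h_1^1)^2$ (the $(\al-1)$ term in \eqref{eel4}): when $\al\geq 1$ this has the wrong sign, so it must be absorbed by $-h_1^1\sum\Phi^{kl}h_k^s h_s^l$, which is licit because $\sum_k\Phi^{kk}\ka_k^2\geq c(n,k)\Phi\,\ka_{\max}$ when $\ka_{\max}$ is large (using $k$‑convexity and that $\Phi=\s_k^{\al/k}$), i.e. $F$ enjoys the usual "the biggest curvature dominates" inequality; this is where the lower bound on $\Phi$ from Lemma \ref{loc-F-lem}, hence positivity of $\s_k$, is used again. (iv) Choose $b$ large to kill the remaining zeroth‑order terms involving $\Phi v$, $\sum\Phi^{ii}$, and $\mathfrak L u_r$; use $\sum_i\Phi^{ii}\ka_i^2\geq c\,(\sum\Phi^{ii})\,\ka_{\max}$ once more. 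Conclude $0\geq \mathfrak L\phi \geq (c-u_r)^m\big(c_1 h_1^1 - c_2\big) - c_3$ at $(x_0,t_0)$, which forces $(c-u_r)^m\log h_1^1\leq C$ there, and since $(x_0,t_0)$ is the max of $\phi$ and the other two terms of $\phi$ are bounded on $K$, the stated estimate follows on all of $K$.

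The main obstacle I anticipate is the third‑order term $\Phi^{pq,rs}\bn_1 h_{pq}\bn^1 h_{rs}$: extracting enough negativity from concavity to absorb both the commutator‑type terms (arising from switching $\bn_1\bn_1 h_{ij}$ to $\bn_i\bn_j h_{11}$, which in the space‑form ambient produces curvature terms of order $(h_1^1)^2$ and $h_1^1$) and the square of the cutoff gradient, simultaneously, while keeping all constants independent of $r$. The standard remedy — following the Chou–Wang / Guan–Ren–Wang type argument for $\s_k^{1/k}$ — is to split the index set into those $j$ with $\ka_j$ close to $\ka_{\max}$ and those bounded away, and to use the inequality $\sum_{j}\Phi^{jj,11}(\bn_1 h_{jj})^2\geq \sum_{j\geq 2}\dfrac{\Phi^{jj}-\Phi^{11}}{\ka_1-\ka_j}(\bn_1 h_{1j})^2\cdot(\text{const})$ together with $\dfrac{(\bn_1 h_{1j})^2}{h_1^1}$ bounds from the first‑order condition; in the present normalized‑flow setting one must additionally track the time derivative $\partial_t(c-u_r)^m$ and the $(1-\al)\Phi v$ contribution, but these are lower order and absorbed by the $b u_r$ term. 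A secondary technical point is that $h_1^1$ is not smooth when the top eigenvalue has multiplicity, handled in the usual way by perturbing to a simple eigenvalue or by working with the smooth proxy and noting all inequalities are in the viscosity/barrier sense. Finally, the verification that $r_c$ exists and that $K\subset\bigcup_t(\Omega_r(t)\times\{t\})$ follows from the local $C^0$ estimate in Subsection \ref{lc2} exactly as recorded in the statement of Lemma \ref{loc-F-lem}.
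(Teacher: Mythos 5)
You take the right overall route — a Pogorelov‑type interior estimate using the evolution equations of Lemma \ref{evolution equation lem} and the two‑sided bound on $\Phi$ from Lemma \ref{loc-F-lem} — but your test function is structurally different from the paper's, and at least one of your absorption steps has a concrete gap.

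The paper maximizes
\[\log\varphi=m\log(c-u)+\log\log P_m-\log\Bigl(1-\frac{m\Phi}{M}\Bigr),\qquad P_m=\sum_i\kappa_i^m,\quad M\geq 2m\sup_{u\leq c}\Phi,\]
rather than your $\phi=(c-u)^m\log h_1^1+a\log v+bu$. Two consequences of this choice matter. First, $P_m$ is a smooth symmetric function, so the eigenvalue‑multiplicity issue you flag never arises; more importantly, the control of the third‑order terms is \emph{not} a generic Chou--Wang / Guan--Ren--Wang index split but is outsourced to Lemmas 8 and 9 of \cite{LRW16}, which are tailored precisely to the $P_m$ test function and to the grouped quantities $A_i,B_i,C_i,D_i,E_i$ appearing in \eqref{lc2.10}. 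If you insist on $h_1^1$ you would have to prove analogues of those lemmas, which your proposal only gestures at. Second, working with $\log\log P_m$ produces an extra $\frac{1}{\log P_m}$ prefactor on the term coming from $(\al-1)\Phi\sum_kh_1^kh_k^1$, so after reorganization the bad contribution is $\frac{n(\al-1)\Phi\kappa_1}{\log P_m}$ (one power of $\kappa_1$, not two), and it is dominated not by $-h_1^1\sum\Phi^{kl}h_k^sh_s^l$ but by the term $\frac{\Phi\Phi^{ii}\kappa_i^2}{M-m\Phi}$ contributed by $-\log(1-m\Phi/M)$, once $\log\kappa_1>CM$. This is where the upper bound on $\Phi$ (which fixes $M$) and the lower bound on $\s_k$ enter.

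The concrete gap is in your step (iii). You claim $(\al-1)\Phi(h_1^1)^2$ is absorbed by $-h_1^1\sum\Phi^{kl}h_k^sh_s^l$ because $\sum_k\Phi^{kk}\kappa_k^2\geq c(n,k)\Phi\kappa_{\max}$. That inequality is true (it follows from $\s_k^{11}\kappa_1\geq\eta_0\s_k$), but $c(n,k)$ is a fixed small constant, so the absorption only works when $\al-1\leq c(n,k)$; for general $\al\geq 1$ the term $(\al-1)\Phi\kappa_1^2$ is simply too large to be swallowed by that single term. In your formulation the remedy would be to use the extra $-a\Phi^{kk}\kappa_k^2$ from $a\log v$, but then $a$ must be taken of size $\gtrsim c^m(\al-1)/c(n,k)$ — a dependence on $\al$ and on the cutoff parameter that you do not make explicit; and one must still verify that the cross terms arising from the product $(c-u)^m\log h_1^1$ (which carry unbounded $\log h_1^1$ factors in the first‑order condition) do not spoil the balance, a point that the paper's purely logarithmic ansatz sidesteps automatically. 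As written, your proposal is a plausible outline but does not close; the paper's choice of test function and its reliance on the Li--Ren--Wang lemmas are what make the estimate go through.
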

\begin{proof}
 In this proof, we will drop the subscript $r$ and denote $u_r$ by $u.$ Consider $$\varphi=\frac{(c-u)^m\log P_m}{1-\frac{m\Phi}{M}},$$ where $m$ is some positive integer to be determined later, $M\geq 2m\sup\limits_{u\leq c}\Phi,$ and
$$P_m=\sum_i\kappa_i^m.$$
Then we get
\[\log\varphi=m\log(c-u)+\log\log P_m-\log\lt(1-\frac{m\Phi}{M}\rt).\]
We suppose $\varphi$ achieves its maximum value at $(x_0,t_0).$ We may choose a local orthonormal frame $\{\tau_1,\cdots,\tau_n\}$  such that at $(x_0,t_0),$ $h_{ij}=\ka_k\delta_{ij}$ and $\kappa_1\geq\cdots\geq \kappa_n$.
At the maximal value point $(x_0, t_0),$ differentiating $\log\varphi$ twice we get
\be\label{lc2.5}
0=\frac{\varphi_i}{m\varphi}=\frac{-u_i}{c-u}+\frac{1}{\log P_m}\frac{\sum\limits_j\ka_j^{m-1}h_{jji}}{P_m}+\frac{\Phi_i}{M-m\Phi},
\ee
and
\be\label{lc2.6}
\begin{aligned}
0\geq&\frac{\varphi_{ii}}{m\varphi}=\frac{-u_{ii}}{c-u}-\frac{u_i^2}{(c-u)^2}-\frac{m}{(\log P_m)^2}\lt(\frac{\sum\limits_j\ka_j^{m-1}h_{jji}}{P_m}\rt)^2\\
&+\frac{1}{P_m\log P_m}\lt[\sum\limits_j\ka_j^{m-1}h_{jjii}+(m-1)\sum\limits_j\ka_j^{m-2}h_{jji}^2+\sum\limits_{p\neq q}\frac{\ka_p^{m-1}
-\ka_q^{m-1}}{\ka_p-\ka_q}h^2_{pqi}\rt]\\
&-\frac{m}{P_m^2\log P_m}\lt(\sum\limits_j\ka_j^{m-1}h_{jji}\rt)^2+\frac{\Phi_{ii}}{M-m\Phi}+\frac{m(\Phi)^2_i}{(M-m\Phi)^2}.
\end{aligned}
\ee
Therefore, at $(x_0, t_0)$ we obtain
\be\label{lc2.7}
\begin{aligned}
0\leq&\frac{\mathfrak{L}\varphi}{m\varphi}=\frac{-\mathfrak{L}u}{c-u}+\frac{1}{\log P_m}\frac{\sum\limits_j\ka_j^{m-1}\mathcal{L}\ka_j}{P_m}\\
&+\frac{\mathfrak{L}\Phi}{M-m\Phi}+\frac{\Phi^{ii}u_i^2}{(c-u)^2}+\frac{m\Phi^{ii}}{(\log P_m)^2}\lt(\frac{\sum\limits_j\ka_j^{m-1}h_{jji}}{P_m}\rt)^2\\
&-\frac{\Phi^{ii}}{(\log P_m)P_m}\lt[(m-1)\sum\limits_j\ka_j^{m-2}h^2_{jji}+\sum\limits_{p\neq q}\frac{\ka_p^{m-1}-\ka_q^{m-1}}{\ka_p-\ka_q}h^2_{pqi}\rt]\\
&+\frac{m\Phi^{ii}}{\log P_m}\lt(\frac{\sum\limits_j\ka_j^{m-1}h_{jji}}{P_m}\rt)^2-\frac{m\Phi^{ii}(\Phi_i)^2}{(M-m\Phi)^2}.
\end{aligned}
\ee
For our convenience, in below, we will denote $\beta=\frac{\al}{k},$ then $\Phi=\s_k^\beta.$
Plugging
\[\Phi^{pq}=\beta\s_k^{\beta-1}\s_k^{pq}\text{ and }\ \
\Phi^{pq, rs}=\beta\s_k^{\beta-1}\s_k^{pq, rs}+\beta(\beta-1)\s_k^{\beta-2}\s_k^{pq}\s_k^{rs}\]
into \eqref{lc2.7}, we get
\be\label{lc2.8}
\begin{aligned}
0\leq&\frac{(\beta k-1)\Phi v}{c-u}+\frac{1}{(\log P_m)P_m}\sum\limits_j\ka_j^{m-1}\left[(\beta k-1)\Phi\ka_j^2-\ka_j\Phi^{ii}\ka_i^2\right.\\
&\left.+\beta\s_k^{\beta-1}\s_k^{pq, rs}h_{pqj}h_{rsj}+\beta(\beta-1)\s_k^{\beta-2}(\s_k)_j^2\right]\\
&-\frac{\Phi\Phi^{ii}\ka_i^2}{M-m\Phi}+\frac{\beta\s_k^{\beta-1}\s_k^{ii}u^2_i}{(c-u)^2}+\frac{m\beta\s_k^{\beta-1}\s_k^{ii}}{(\log P_m)^2}\lt(\frac{\sum\limits_j\ka_j^{m-1}h_{jji}}{P_m}\rt)^2\\
&-\frac{\beta\s_k^{\beta-1}\s_k^{ii}}{(\log P_m)P_m}\lt[(m-1)\sum\limits_j\ka_j^{m-2}h^2_{jji}+\sum\limits_{p\neq q}\frac{\ka_p^{m-1}-\ka_q^{m-1}}{\ka_p-\ka_q}h^2_{pqi}\rt]\\
&+\frac{m\beta\s_k^{\beta-1}\s_k^{ii}}{\log P_m}\lt(\frac{\sum\limits_j\ka_j^{m-1}h_{jji}}{P_m}\rt)^2-\frac{m\beta\s_k^{\beta-1}\s_k^{ii}(\Phi_i)^2}{(M-m\Phi)^2}.
\end{aligned}
\ee
For any fixed index $1\leq i\leq n$, we denote
\[A_i=\frac{\ka_i^{m-1}}{P_m}\lt[K(\s_k)_i^2-\sum\limits_{p, q}\s_k^{pp, qq}h_{ppi}h_{qqi}\rt], \ \ B_i=\frac{2}{P_m}\sum\limits_j\ka_j^{m-1}\s_k^{jj, ii}h^2_{jji},\]
\[C_i=\frac{m-1}{P_m}\s_k^{ii}\sum\limits_j\ka^{m-2}_jh_{jji}^2,\ \ D_i=\frac{2\s_k^{jj}}{P_m}\sum\limits_{j\neq i}\frac{\ka_j^{m-1}-\ka_i^{m-1}}{\ka_j-\ka_i}h^2_{jji},\]
and
\[E_i=\frac{m}{P_m^2}\s_k^{ii}\lt(\sum\limits_j\ka_j^{m-1}h_{jji}\rt)^2,\]
where $K>0$ depends on $\min\limits_{u\leq c}\s_k.$\
Then \eqref{lc2.8} becomes
\be\label{lc2.10}
\begin{aligned}
0&\leq\frac{(\beta k-1)\Phi v}{c-u}+\frac{n(\beta k-1)\Phi\ka_1}{\log P_m}-\frac{\beta\s_k^{\beta-1}\s_k^{ii}\ka_i^2}{\log P_m}\\
&-\frac{\beta\s_k^{\beta-1}}{\log P_m}\sum_i\lt[A_i+B_i+C_i+D_i-\lt(1+\frac{1}{\log P_m}\rt)E_i\rt]\\
&+\frac{\beta\s_k^{\beta-1}}{\log P_m}\sum\limits_i\frac{\ka_i^{m-1}}{P_m}K(\s_k)_i^2+\frac{\beta(\beta-1)\s_k^{\beta-2}}{(\log P_m)P_m}
\sum\limits_j\ka_j^{m-1}(\s_k)^2_j\\
&-\frac{\Phi\Phi^{ii}\ka_i^2}{M-m\Phi}+\frac{\beta\s_k^{\beta-1}\s_k^{ii}u_i^2}{(c-u)^2}
-\frac{m\beta\s_k^{\beta-1}\s_k^{ii}(\Phi_i)^2}{(M-m\Phi)^2}.
\end{aligned}
\ee
Moreover, it is easy to see that $\s_k^{11}\ka_1\geq \eta_0\s_k$ for some $\eta_0=\eta_0(n, k),$ which implies $\sum\s_k^{ii}\geq\frac{\eta_0\s_k}{\ka_1}.$
If $\log\ka_1>c_1M^2$ for $c_1=c_1(\s_k, \eta_0, m, K, \beta)$, then we have
\be\label{lc2.11}
\frac{m}{2M^2}\s_k^{ii}[\al\s_k^{\beta-1}(\s_k)_i]^2>\frac{\ka_i^{m-1}}{P_m\log P_m}K(\s_k)_i^2
+\frac{|\beta-1|\s_k^{-1}\ka_i^{m-1}(\s_k)_i^2}{(\log P_m)P_m}.
\ee
Note by \eqref{lc2.5} we obtain
\be\label{lc2.12}
\begin{aligned}
\lt(\frac{u_i}{c-u}\rt)^2&=\lt(\frac{1}{\log P_m}\frac{\sum\limits_j\ka_j^{m-1}h_{jji}}{P_m}+\frac{\Phi_i}{M-m\Phi}\rt)^2\\
&\leq\frac{2}{(\log P_m)^2}\lt(\frac{\sum\limits_j\ka_j^{m-1}h_{jji}}{P_m}\rt)^2+\frac{2(\Phi_i)^2}{(M-m\Phi)^2}.
\end{aligned}
\ee
Combining \eqref{lc2.11} and \eqref{lc2.12} with \eqref{lc2.10}, we get
\be\label{lc2.13}
\begin{aligned}
0&\leq\frac{(\al-1)\Phi v}{c-u}+\frac{n(\al-1)\Phi\ka_1}{\log P_m}-\frac{\beta\s_k^{\beta-1}\s_k^{ii}\ka_i^2}{\log P_m}\\
&-\frac{\beta\s_k^{\beta-1}}{\log P_m}\sum_i\lt[A_i+B_i+C_i+D_i-\lt(1+\frac{1}{\log P_m}+\frac{2}{m\log P_m}\rt)E_i\rt]\\
&-\frac{\Phi\Phi^{ii}\ka_i^2}{M-m\Phi}-\lt(\frac{m}{2}-2\rt)\frac{\beta\s_k^{\beta-1}\s_k^{ii}(\Phi_i)^2}{(M-m\Phi)^2}.
\end{aligned}
\ee
If we assume $m, K$ and $\kappa_1$ are all sufficiently large, the Lemma 8 and Lemma 9 of \cite{LRW16} gives
$$\sum_i\lt[A_i+B_i+C_i+D_i-\lt(1+\frac{1}{\log P_m}+\frac{2}{m\log P_m}\rt)E_i\rt]\geq 0.$$
It's clear that when $\log\ka_1>CM$ for some $C=C(n, k, \s_k, m, \beta),$ we have
\[\frac{\Phi\Phi^{ii}\ka_i^2}{2M}>\frac{n(\al-1)\Phi\ka_1}{\log P_m}.\]
Thus, \eqref{lc2.13} yields
\[0\leq\frac{(\al-1)\Phi v}{c-u}-\frac{\eta_0\beta\Phi^2\ka_1}{2(M-m\Phi)},\] which gives the desired estimate.
\end{proof}

\section{Convergence}
\label{conv}
In Section \ref{sap}, we have shown there exists a solution to the initial value problem \eqref{sap1.1}.
Now, denote \be\label{urs}\td{u}^*_r(x,t)=\frac{u_r^*(x,t)}{A(t)}, \text{ where } \ \ A(t)=[(1+\al)\td{t}]^\frac{1}{1+\al}=[(1+\al)t+1]^\frac{1}{1+\al}.\ee
Let $\tau=\int_0^t[(1+\al)s+1]^{-1}ds,$
then $\td{u}^*_r$ satisfies
\be\label{conv1.2}
\left\{
\begin{aligned}
(\td{u}^*_r)_{\tau}&=-F_*^{-\al}(w^*\gas_{ik}(\td{u}^*_r)_{kl}\gas_{lj})w^*-\td{u}^*_r\,\,&\mbox{in $B_r\times(0, T]$},\\
\td{u}^*_r(\cdot, t)&=u_0^*\,\, &\mbox{on $\p B_r\times[0, T],$}\\
\td{u}^*_r(\cdot, 0)&=u^*_0\,\,&\mbox{on $B_r\times\{0\}.$}
\end{aligned}
\right.
\ee
Notice that if $X_r=(x, u_r(x, t))$ for $(x, t)\in Du^*_r(B_r, t)\times\{t\},$ is the position vector for the graph $u_r$ which is the Legendre transform of
$u^*_r,$ then $\td{X}_r=\frac{X_r}{A(t)}=\lt(A(t)x, \frac{1}{A(t)}u_r(A(t)x, t)\rt),$ where $(x, t)\in\frac{1}{A(t)}Du_r^*(B_r, t)\times\{t\},$
is the position vector for the graph of the Legendre transform of $\td{u}^*_r.$
In the following we will prove two Lemmas.
\begin{lemm}
\label{conv-lem1}
Let $\td{u}_r^*$ be defined as in  \eqref{urs}, then we have $\td{u}^*_r(\cdot, t)\goto u_r^{\infty*}(\cdot)$ uniformly in $B_r$ as $t\goto\infty.$ Here $u^{\infty*}_r$
satisfies
\be\label{conv1.3}
\left\{
\begin{aligned}
F_*^{-\al}(w^*\ga^*_{ik}(u^{\infty*}_r)_{kl}\gamma_{lj}^*)w^*&=-u^{\infty*}_r\,\,&\mbox{in $B_r$}\\
u^{\infty*}_r&=u^*_0\,\,&\mbox{on $\p B_r$}.
\end{aligned}
\right.
\ee
\end{lemm}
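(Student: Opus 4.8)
The plan is to treat the rescaled equation \eqref{conv1.2} as a uniformly parabolic, concave fully nonlinear equation on the \emph{fixed} ball $B_r$ (note that $w^*\ge\sqrt{1-r^2}>0$ on $\bar B_r$, so the degeneracy of \eqref{int1.1} does not occur for $r<1$), to establish a priori estimates that are uniform in the new time variable $\tau$, to prove that $\td u^*_r$ is monotone in $\tau$, and then to pass to the limit $\tau\to\infty$. For the uniform estimates I would first observe that, after the rescaling \eqref{urs}, the bounds of Section~\ref{sap} become $t$-independent. Lemma~\ref{c0-lem} directly yields $-C_1\le\td u^*_r\le C_0$ on $\bar B_r\times[0,\infty)$. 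Since $F_*=(\s_n/\s_{n-k})^{1/k}$ is homogeneous of degree one in the Hessian, $F_*^\al\big(w^*\gas_{ik}(\td u^*_r)_{kl}\gas_{lj}\big)=A(t)^{-\al}F_*^\al\big(w^*\gas_{ik}(u^*_r)_{kl}\gas_{lj}\big)$, so the $T$-dependent upper bound in Lemma~\ref{sap-lem3.1} exactly cancels the $A(t)^\al$ factor, giving $c\le F_*^\al(w^*\gas_{ik}(\td u^*_r)_{kl}\gas_{lj})\le C$ on $\bar B_r\times[0,\infty)$ with $c,C$ depending only on $\M_0$ and $r$. Feeding this into the barrier construction of Lemma~\ref{sap-lem4.1}, now with the time-independent boundary datum $u^*_0$, produces a uniform gradient bound, and the arguments of Subsection~\ref{sap5} similarly give uniform boundary $C^2$ bounds. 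Because $-F_*^{-\al}$ is concave in the Hessian (as $\al\ge1$ and $F_*$ is concave and positive), parabolic Evans--Krylov and Schauder estimates then upgrade these to uniform $C^{2,\alpha}$ and hence $C^\infty_{loc}(B_r)$ bounds, all independent of $t$.

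Next I would prove monotonicity in $\tau$. Write the spatial operator in \eqref{conv1.2} as $G[\td u^*_r]=-F_*^{-\al}(w^*\gas_{ik}(\td u^*_r)_{kl}\gas_{lj})w^*$, so that $(\td u^*_r)_\tau=G[\td u^*_r]-\td u^*_r$, and set $\psi=\p_\tau\td u^*_r$. Differentiating in $\tau$ shows that $\psi$ solves the linear equation $\psi_\tau=a^{kl}\psi_{kl}-\psi$, where $a^{kl}$ is the derivative of $G$ in the Hessian entries, positive definite and (by the previous paragraph) uniformly elliptic. On $\p B_r$ the boundary value of $\td u^*_r$ is the constant-in-$\tau$ function $u^*_0$, so $\psi\equiv0$ there; at $\tau=0$, assumption \eqref{Cond}, in the form $F_*^{-\al}(w^*\gas_{ik}(u^*_0)_{kl}\gas_{lj})w^*<-u^*_0$ already used in Subsection~\ref{sap2}, gives $\psi|_{\tau=0}=G[u^*_0]-u^*_0>0$ in $B_r$. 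Since the zeroth order coefficient $-1$ is negative, the parabolic minimum principle forces $\psi\ge0$ on $\bar B_r\times[0,\infty)$, and the strong maximum principle gives $\psi>0$ in $B_r$ for $\tau>0$. Hence $\td u^*_r(\xi,\cdot)$ is nondecreasing in $\tau$ (equivalently in $t$).

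Finally I would extract the limit and identify its equation. By monotonicity and the uniform upper bound, $\td u^*_r(\xi,t)\nearrow u^{\infty*}_r(\xi)$ for each $\xi\in B_r$; the uniform $C^1$ bound makes the family equicontinuous, so the limit is continuous and, by Dini's theorem, the convergence is uniform on $\bar B_r$, which is the assertion of the lemma; in particular $u^{\infty*}_r=u^*_0$ on $\p B_r$. Along any sequence $t_j\to\infty$ the uniform interior estimates give $C^\infty_{loc}(B_r)$-convergence, with limit $u^{\infty*}_r$ by uniqueness of the pointwise limit. Moreover $\psi\ge0$ and $\int_0^\infty\psi(\xi,\tau)\,d\tau=u^{\infty*}_r(\xi)-u^*_0(\xi)<\infty$, while $\psi$ and its derivatives are uniformly bounded (the estimates of the first paragraph applied to the linear equation for $\psi$); hence $\psi(\cdot,\tau)\to0$ uniformly on $\bar B_r$. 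Passing to the limit in \eqref{conv1.2} yields $F_*^{-\al}(w^*\gas_{ik}(u^{\infty*}_r)_{kl}\gas_{lj})w^*=-u^{\infty*}_r$ in $B_r$, i.e. \eqref{conv1.3}.

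The delicate part is the first step: one must carefully verify that the a priori estimates for \eqref{sap1.1}, which in Section~\ref{sap} carry a $T$-dependence — most notably the global $C^2$ bound — genuinely become uniform in time when expressed in terms of $\td u^*_r$, so that Evans--Krylov and Schauder theory can be applied on the infinite time interval. Once these uniform estimates are in hand, the remaining two steps are a routine monotone-convergence argument.
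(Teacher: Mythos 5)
Your proof follows essentially the same route as the paper: uniform (time-independent) $C^0$, $C^1$, $F_*$, and $C^2$ estimates for $\td u^*_r$, monotonicity of $\td u^*_r$ in $\tau$ via the sign of $\td H:=(\td u^*_r)_\tau$ (which the paper obtains from $\mathcal L\td H=-\td H$ with $\td H\geq 0$ at $\tau=0$ and $\td H=0$ on $\p B_r$; your $\psi$ is exactly $\td H$), and then $\int_0^\infty\td H\,d\tau<\infty$ from the uniform $C^0$ bound, forcing $\td H\to 0$ and giving the limiting elliptic equation. The one misstatement is in the first paragraph: the \emph{lower} bound $\tilde F>\frac{1}{C_2x_{n+1}}$ from Lemma~\ref{sap-lem3.1} does \emph{not} rescale to a uniform lower bound on $F_*^\al(w^*\gas_{ik}(\td u^*_r)_{kl}\gas_{lj})$ (the factor $A(t)^{-\al}$ degenerates); the uniform lower bound actually comes from $\td H\geq 0$, i.e.\ from the monotonicity argument you give in your second paragraph, so the logical order of your first two paragraphs should be swapped (or the lower bound deferred). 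With that reordering the argument matches the paper's.
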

The proof of this Lemma will be given in the Subsection 4.1.

\begin{lemm}
\label{conv-lem2}
Let $u_r$ be the Lengendre transform of $u_r^*$. Then $\frac{1}{A(t)}u_r(A(t)x, t)\goto\frac{1}{A(t)}u(A(t)x, t)$ as $r\goto 1$ uniformly in any compact subset of
$\R^n\times[0, \infty).$
\end{lemm}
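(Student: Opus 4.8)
The plan is to obtain the convergence from the uniform‑in‑$r$ local a priori estimates of Section~\ref{le} together with the standard regularity theory for concave uniformly parabolic equations, working first with the Legendre transforms $u^*_r$ of the solutions of \eqref{sap1.1} on the balls $B_r$ and then transferring back. Since $A(t)$ is a fixed positive continuous function of $t$ and $(x,t)\mapsto(A(t)x,t)$ is a homeomorphism of $\R^n\times[0,\infty)$, it suffices to prove that $u_r\to u$ uniformly on compact subsets of $\R^n\times[0,\infty)$, where $u$ is to be the Legendre transform of $u^*:=\lim_{r\to1}u^*_r$. First I would check that this limit exists by monotonicity: for $r<r'$, both $u^*_r$ and $u^*_{r'}$ solve the parabolic equation in \eqref{sap1.1} on $B_r\times(0,T]$ with the same initial value $u^*_0$, while on $\p B_r\times[0,T]$ one has $u^*_r=\tbus\le u^*_{r'}$ --- the equality because $\tbus=[(1+\al)\td t]^{1/(1+\al)}u^*_0$ restricts on $\p B_r$ to the boundary datum of $u^*_r$, and the inequality by Lemma~\ref{c0-lem}. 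The parabolic comparison principle then gives $u^*_r\le u^*_{r'}$ on $B_r\times[0,T]$, so the $u^*_r$ increase pointwise on $B_1\times[0,\infty)$ to a limit $u^*$ with $\tbus\le u^*\le\tlus$.

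Next I would assemble the uniform‑in‑$r$ local estimates. Fix a compact $K\times[0,T]\subset\R^n\times[0,\infty)$. Using the local $C^0$ bound of Section~\ref{le} and the convexity of $u_r(\cdot,t)$, one sees that $|Du_r(x,t)|\to1$ as $|x|\to\infty$, uniformly for $t\le T$ and $r$ close to $1$; hence for such $r$ the time‑dependent domain $\bigcup_t\bigl(Du^*_r(B_r,t)\times\{t\}\bigr)$ of $u_r$ contains $K\times[0,T]$, and dually a fixed $\bar B_\rho\times[0,T]$ with $\rho<1$ lies in the region where all the $u^*_r$ are defined. On these sets one has the $C^0$ bound (Lemma~\ref{c0-lem}), the gradient bound $|Du_r|\le1-\delta$ from Subsection~\ref{sub-loc-c1}, and a two‑sided bound on the principal curvatures of $\M_{u_r}$: $\ka_{\max}\le C$ by Lemma~\ref{loc-c2-lem}, while the curvatures are bounded below because $\Phi=F^\al$ is bounded below by Lemma~\ref{loc-F-lem} and $\ka_{\max}$ is bounded above. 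Passing through the Legendre transform, which is bi‑Lipschitz under these bounds, this yields on $\bar B_\rho\times[0,T]$ that $|Du^*_r|$ is bounded and $D^2u^*_r$ is bounded above and below away from zero, all uniformly in $r$.

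With these bounds, equation \eqref{sap1.1} has the form $u^*_t=G(D^2u^*,\xi)$ (no dependence on $u^*$ or $Du^*$), is uniformly parabolic on $\bar B_\rho\times[0,T]$, and $G$ is \emph{concave} in $D^2u^*$: since $F_*=(\s_n/\s_{n-k})^{1/k}$ is concave (as used throughout Section~\ref{sap}), $F_*^{-1}$ is convex, hence $F_*^{-\al}$ is convex for $\al\ge1$, hence $-F_*^{-\al}$ is concave, while $w^*$ and $\gas$ are independent of $D^2u^*$. I would then invoke the interior Krylov--Safonov and parabolic Evans--Krylov estimates to obtain uniform‑in‑$r$ interior $C^{2+\beta,\,1+\beta/2}$ bounds on slightly smaller sets, and iterate Schauder theory after differentiating the equation to get uniform interior $C^\infty$ bounds on compact subsets of $B_1\times(0,\infty)$. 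Hence $u^*\in C^\infty(B_1\times(0,\infty))$, passing to the limit in \eqref{sap1.1} shows $u^*$ solves \eqref{int1.1}, and by Dini's theorem the monotone convergence $u^*_r\uparrow u^*$ is locally uniform on $B_1\times[0,\infty)$. Finally, the Legendre transform of $u^*$ is $u$, the solution of \eqref{int1}, and the uniform $C^2$ control makes the Legendre transform continuous, so $u_r\to u$ uniformly on compact subsets of $\R^n\times[0,\infty)$; dividing by $A(t)\in[1,A(T)]$ and evaluating at $A(t)x$ gives the stated convergence.

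The hardest part will be the uniform‑in‑$r$ interior $C^{2,\beta}$ (Evans--Krylov) estimate, which hinges on two things that must be verified with care: that the constants in Lemmas~\ref{loc-c2-lem} and \ref{loc-F-lem} are genuinely independent of $r$, so that the equation really is uniformly parabolic on compact sets away from $\p B_1$; and the observation that, although $\s_k^{\al/k}$ is not concave for $\al>1$, the Legendre‑transformed operator $-F_*^{-\al}$ is. I would also need to be careful that all estimates are interior in space --- problem \eqref{int1.1} degenerates on $\p B_1$ --- and to treat the behaviour near $t=0$ using the compatibility there of the initial and boundary data together with the regularity of $u_0$.
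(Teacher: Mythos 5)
Your overall strategy matches the paper's: establish that for $r$ close to $1$ the (time-dependent) domain of $u_r$ eventually contains any fixed compact set, then exploit the uniform-in-$r$ local estimates of Section~\ref{le} to obtain compactness and pass to the limit. Your additional monotonicity argument for $u^*_r$ (via the comparison principle and Lemma~\ref{c0-lem}), together with Dini's theorem, is a valid and clean way to organize the passage to a limit that the paper leaves implicit; that part is a welcome refinement rather than a departure.

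There is, however, a genuine gap in the domain-expansion step. You assert that ``$|Du_r(x,t)|\to1$ as $|x|\to\infty$, uniformly for $t\le T$ and $r$ close to $1$'' follows from the local $C^0$ bound and convexity, and from this deduce that $\bigcup_t\bigl(Du^*_r(B_r,t)\times\{t\}\bigr)$ contains $K\times[0,T]$. But for each fixed $r<1$ the set $Du^*_r(B_r,t)$ is \emph{bounded}, so ``$|x|\to\infty$'' is vacuous, and the local $C^0$ bound by itself does not show these bounded domains sweep out $\R^n$ as $r\to1$. This is precisely where the paper does real work: it introduces the stationary comparison functions $\hat u^*_r$ solving \eqref{conv1.3}, uses the inequality $\td u^*_r\le\hat u^*_r$ with equal data on $\p B_r$ to compare normal derivatives, hence obtains $\td u_r|_{\p\td\Omega(r,t)}\ge\hat u_r|_{\p\hat\Omega(r)}$, and then invokes the key fact from \cite{WX22-1} that $\hat u_r|_{\p\hat\Omega(r)}\to+\infty$ as $r\to1$; combined with the uniform bound on $\td u_r$ over $K$ coming from the spacelike gradient bound, this forces $K\subset\td\Omega(r,t)$ for $r$ near $1$, uniformly in $t$. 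Without an argument of this type your proof never gets off the ground on a fixed compact set. A smaller but real issue: your claim that $\kappa_{\min}$ is bounded below ``because $\Phi$ is bounded below and $\kappa_{\max}$ is bounded above'' fails for $k<n$ (one can have $\sigma_k$ bounded below with $\kappa_{\min}\to0$), so the uniform parabolicity you need for Krylov--Safonov/Evans--Krylov requires a separate justification rather than following from Lemmas~\ref{loc-F-lem} and \ref{loc-c2-lem} as stated.
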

The proof of this Lemma will be given in the Subsection 4.1.

In view of Section 3 of \cite{WX22-1} we can see that as $r\goto 1$, $u_r^{\infty}$ which is the Legendre transform of $u_r^{\infty*}$ converges to $u^\infty$ uniformly on any compact set $K\subset\R^n,$ and $u^\infty$ satisfies
\[
\left\{
\begin{aligned}
\s_k^{\frac{\al}{k}}(\ka[\M_{u^\infty}])&=-\lt<X_{u^\infty}, \nu_{u^\infty}\rt>\\
u^\infty-|x|&\goto\varphi\lt(\frac{x}{|x|}\rt).
\end{aligned}
\right.
\]
Combining this fact with Lemma \ref{conv-lem1} and Lemma \ref{conv-lem2}, we conclude
\begin{coro}
\label{conv-cor1}
Let $u^*$ be the solution of the initial value problem \eqref{int1.1} and $u$ be the Lengendre transform of $u^*.$ Then
for any sequence $\{t_j\}\goto\infty$ there exists a subsequence $\{t_{j_k}\}\goto\infty$ such that
\[\frac{1}{A(t_{j_k})}u(A(t_{j_k})x, t_{j_k})\goto u^\infty(x)\]
uniformly in any compact set $K\subset\R^n.$ Moreover, $u^\infty$ satisfies
\be\label{conv1.4}
\left\{
\begin{aligned}
\s_k^{\frac{\al}{k}}(\ka[\M_{u^\infty}])&=-\lt<X_{u^\infty}, \nu_{u^\infty}\rt>\\
u^\infty-|x|&\goto\varphi\lt(\frac{x}{|x|}\rt).
\end{aligned}
\right.
\ee
\end{coro}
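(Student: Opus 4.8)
The plan is to read Corollary \ref{conv-cor1} as a compactness statement for the rescaled flow $\td u(x,t):=\frac{1}{A(t)}u(A(t)x,t)$, the Legendre transform of $\td u^*(\cdot,t)=u^*(\cdot,t)/A(t)$, built out of the three facts just assembled: the stabilization of each approximate rescaled flow (Lemma \ref{conv-lem1}), the approximation $u_r\goto u$ as $r\goto 1$ (Lemma \ref{conv-lem2}), and the convergence $u_r^\infty\goto u^\infty$ of the limiting self-expanders from Section 3 of \cite{WX22-1}.

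\emph{Uniform local estimates for the rescaled flow.} First I would check that $\td u$ satisfies $C^0$, $C^1$, $C^2$ bounds on compact subsets of $\R^n$ that are independent of $t$. The local $C^0$ estimate of Section \ref{le} rescales to $\lu(x)<\td u(x,t)<\uu(x)$, which is $t$-free; the future time-like unit normal $\nu$, hence $v=-\langle\nu,e_{n+1}\rangle$, is unchanged by the homothety $X\mapsto X/A(t)$, so the gradient bound of Subsection \ref{sub-loc-c1} descends to a $t$-uniform gradient bound for $\td u$ (here one uses that the comparison functions $\td{\lu}_1,\td{\lu},\td{\uu}$ become time-independent after rescaling); and the principal curvatures of $\M_{\td u(\cdot,t)}$ equal $A(t)$ times those of $\M_{u(\cdot,t)}$, so Lemma \ref{loc-c2-lem} together with the bounds on $\Phi$ from Lemma \ref{loc-F-lem} yields a $t$-uniform bound on the principal curvatures of $\M_{\td u(\cdot,t)}$ on each compact set. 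Each of these is first established for $\td u_r$ uniformly in $r$ and then passed to the limit $r\goto 1$ via Lemma \ref{conv-lem2}.

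\emph{Extraction and identification of the limit.} Given $\{t_j\}\goto\infty$, the bounds above and Arzel\`a--Ascoli (with a diagonal exhaustion $\R^n=\bigcup_mK_m$) would produce a subsequence $\{t_{j_k}\}$ and a locally strictly convex $u^\infty$ with $\lu\le u^\infty\le\uu$ such that $\td u(\cdot,t_{j_k})\goto u^\infty$ in $C^1_{\mathrm{loc}}(\R^n)$ (indeed in $C^2_{\mathrm{loc}}$). It remains to identify $u^\infty$ with the unique self-expander $u^\infty_\ast$ of \cite{WX22-1} satisfying $u^\infty_\ast(x)-|x|\goto\varphi(x/|x|)$, which gives \eqref{conv1.4}. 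For this I would, on a fixed compact $K$ and for given $\e>0$, first pick $r<1$ with $\sup_K|u_r^\infty-u^\infty_\ast|<\e$ (Section 3 of \cite{WX22-1}), then use $\td u_r(\cdot,t)\goto u_r^\infty$ as $t\goto\infty$ (Lemma \ref{conv-lem1} together with continuity of the Legendre transform on compact sets), and finally use $\sup_K|\td u(\cdot,t)-\td u_r(\cdot,t)|\goto 0$ as $r\goto 1$ — a time-uniform version of Lemma \ref{conv-lem2} — to get $\sup_K|u^\infty-u^\infty_\ast|\le 2\e+o_r(1)$, hence $u^\infty\equiv u^\infty_\ast$. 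A cleaner alternative avoids Lemma \ref{conv-lem2} altogether: $\td u^*(\cdot,\tau)$ with $\tau=\log A(t)$ solves the \emph{autonomous} problem \eqref{conv1.2} with $r=1$ and time-independent boundary data $\varphi^*$, and since \eqref{Cond} makes $u_0^*$ a subsolution, $\td u^*(\cdot,\tau)$ is nondecreasing in $\tau$; being bounded (Lemma \ref{c0-lem}) it converges, interior parabolic estimates for \eqref{int1.1} (which is uniformly parabolic away from $\p B_1$) force $(\td u^*)_\tau\goto 0$, so the limit solves \eqref{conv1.3} with $r=1$ and its Legendre transform is $u^\infty_\ast$.

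\emph{Main obstacle.} The delicate point is the interchange of the limits $r\goto 1$ and $t\goto\infty$: Lemma \ref{conv-lem2} as stated is uniform only on compact subsets of $\R^n\times[0,\infty)$, whereas the corollary lives at $t=\infty$. Overcoming this requires either upgrading Lemma \ref{conv-lem2} to a time-uniform estimate — plausible because the comparison functions in its proof are self-similar and the estimates of Section \ref{le} are $t$-uniform after rescaling — or routing the identification of the subsequential limit through the monotonicity of $\td u^*$ in $\tau$ on the fixed ball $B_1$, which brings in the degeneracy of \eqref{int1.1} at $\p B_1$ (handled by working in the interior and patching with the $\R^n$-side estimates). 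Verifying that all the a priori estimates of Section \ref{le} genuinely survive rescaling and the passage $r\goto 1$ is the technical heart of the argument.
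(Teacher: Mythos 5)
Your proposal follows the same outline as the paper: combine Lemma \ref{conv-lem1} (stabilization $\td u_r^*(\cdot,t)\goto u_r^{\infty*}$ as $t\goto\infty$), Lemma \ref{conv-lem2} (convergence $u_r\goto u$ as $r\goto 1$), and the convergence $u_r^\infty\goto u^\infty$ of the truncated self-expanders from \cite{WX22-1}, together with the $t$-uniform local estimates and an Arzel\`a--Ascoli extraction. The paper compresses all of this into one sentence, and you have correctly unpacked what that combination actually requires. In particular your observation about the interchange of limits is well taken: as stated, Lemma \ref{conv-lem2} is uniform only on compact subsets of $\R^n\times[0,\infty)$, so passing $r\goto 1$ and $t\goto\infty$ through each other is not automatic; the needed uniformity is in fact supplied by the $t$- and $r$-independent $C^0$, $C^1$, $C^2$ bounds for $\td u_r^*$ established in Subsubsections 4.1.1--4.1.4, which the paper relies on tacitly at this point without saying so. Your alternative route via the monotonicity of $\td u^*$ in $\tau$ is genuinely different and arguably cleaner: since the rescaled equation \eqref{conv1.2} on $B_1$ is autonomous with time-independent boundary data $\varphi^*$, and \eqref{Cond} makes $u_0^*$ a strict subsolution, the comparison principle gives $\td u^*(\cdot,\tau+h)\geq\td u^*(\cdot,\tau)$, and boundedness by $\lu^*$ plus interior parabolic estimates force convergence of the full flow (not merely subsequential) to a solution of the stationary problem. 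The paper does not take this path --- likely to avoid confronting the degeneracy of \eqref{int1.1} at $\p B_1$ directly and to keep the argument parallel to the approximate problems on $B_r$ --- but the route you sketch is sound and would strengthen the statement.
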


\subsection{Proof of Lemma \ref{conv-lem1} and Lemma \ref{conv-lem2}}
\subsubsection{$C^0$ estimates for $\td{u}^*_r$}
By our assumption \eqref{Cond3} on the initial hypersurface we have
\[\s_k^{\frac{\al}{k}}(\ka[\M_{u_0}])<\frac{-u_0^*}{w^*}=-\lt<X_{u_0}, \nu_{u_0}\rt>,\]
which yields
\[F_*^{-\al}(w^*\ga^*_{ik}(u_0^*)_{kl}\ga_{lj}^*)w^*+u_0^*< 0.\]
Therefore, $u_0^*$ is a subsolution of \eqref{conv1.2}. It's clear that $\lu^{*}$ constructed in Subsection \ref{sap2}
is a supersolution of \eqref{conv1.2}. We conclude that
\[u_0^*\leq\td{u}^*_r\leq\lu^{*}.\]
Here, the $C^0$ estimate of $\td{u}^*_r$ is independent of $r$ and $t$.
\subsubsection{
$C^1$ estimate for $\td{u}^*_r$}
Consider
\be\label{conv1.3}
\left\{
\begin{aligned}
F_*(w^*\ga^*_{ik}u^*_{kl}\ga^*_{lj})&=\lt(\frac{-u^*}{w^*}\rt)^{-\frac{1}{\al}}\,\,&\mbox{in $B_r$}\\
u^*&=u_0^*\,\,&\mbox{on $\p B_r.$}
\end{aligned}
\right.
\ee
By Section 3 of \cite{WX22-1}, we know there exist a solution $\hat{u}^*_r$ of \eqref{conv1.3}.
In view of the standard maximum principle we have
$$u_0^*\leq\td{u}^*_r\leq\hat{u}^*_r\,\, \mbox{and $\td{u}^*_r=u_0^*=\hat{u}^*_r$ on $\p B_r\times[0, \infty).$}$$
This gives $|D\td{u}^*_r|\leq C,$ for some $C>0$ independent of $t.$
\subsubsection{
Bounds for $F_*(w^*\gas_{ik}(\td{u}_r^*)_{kl}\gas_{lj}):=\td{F}_*$}

Let's denote $\td{H}:=-\td{F}^{-\al}_*w^*-\td{u}^*_r,$ a straightforward calculation yields
\[\mathcal{L}\td{H}:=-\td{H},\]
where $\mathcal{L}:=\frac{\p}{\p\tau}-\al(w^*)^2\td{F}^{-\al-1}\td{F}^{ij}_*\gas_{ik}\gas_{lj}\p^2_{kl}.$
By the assumption \eqref{Cond3} on the initial surface we know at $\tau=0,$ $\td{H}\geq 0.$ Moreover, applying the standard short time existence Theorem,
we get on $\p B_r\times(0, \infty),$ $\td{H}=0.$ Therefore, we conclude that $\td{H}\geq 0$ in $\bar{B}_r\times [0, \infty),$
which yields $\td{F}_*\geq\lt(\frac{-\td{u}^*_r}{w^*}\rt)^{-\frac{1}{\al}}\geq C_1,$ where $C_1$ is independent of $t.$

On the other hand, recall Lemma \ref{sap-lem3.1} we know
\[F_*^\al<\frac{A(T)^\al}{C_3}\,\,\mbox{on $\bar{U}_r\times[0, T].$}\]
In particular, for any $T>0$ we get
\[F_*^\al(w^*\ga_{ik}^*\lt(u_r^*(\cdot, T))_{kl}\ga^*_{lj}\rt)<\frac{A(T)^\al}{C_3},\]
where $C_3$ depends on $\M_0$ and $r$.
This gives
\[F_*^\al(w^*\gas_{ik}(\td{u}_r^*)_{kl}\gas_{lj})A(t)^\al<\frac{A(t)^\al}{C_3},\]
which is equivalent to
\[\td{F}^\al_*<\frac{1}{C_3}.\]
Here, note that $C_3$ is independent of $t.$

\subsubsection{$C^2$ estimates for $\td{u}^*_r$} Now let $\td{v}_r=\frac{\td{u}^*_r}{w^*}$ then $\td{v}_r$ satisfies
\be\label{conv1.4}
\left\{
\begin{aligned}
\lt(\td{v}_r\rt)_\tau &=-F_*^{-\al}\lt(\td{\Lambda}_{ij}\rt)-\td{v}_r\,\,&\mbox{in $U_r\times(0, \infty),$}\\
\td{v}_r &=v^{*}_0=\frac{u_0^*}{w^*}\,\,&\mbox{on $\p U_r\times[0, \infty)$},\\
\td{v}_r&=v^{*}_0\,\,&\mbox{on $U_r\times\{0\},$}
\end{aligned}
\right.
\ee
where $\td{\Lambda}_{ij}=\bn_{ij}\td{v}_r-\td{v}_r\delta_{ij}.$
We will denote $\td{F}^{-1}=F_*^{-\al}(\td{\Lambda}_{ij}).$ For any smooth function $\phi,$ we define
\[L\phi:=\phi_\tau-\td{F}^{-2}\td{F}^{ij}_{\td{v}}\nabla_{ij}\phi
+\lt(\td{F}^{-2}_{\td{v}}\sum\limits_i\td{F}^{ii}_{\td{v}}+1\rt)\phi.\]
Notice that we have proved the $C^0, C^1$ estimates of $\td{u}_r^*$ are independent of $t.$ This implies the $C^0, C^1$ estimates of $\td{v}_r$ are independent of $t.$
Moreover, we also know the upper and lower bounds of $\td{F}^{-1}$ are independent of $t.$ By a small modification of the argument in Subsection \ref{sap5}, we obtain a $C^2$ boundary estimate of
$\td{v}_r$ that is independent of $t.$ The global $C^2$ estimate for $\td{v}_r$ follows from a small modification of the proof of Lemma 20 in \cite{WX212}, and it is not hard to see that this estimate is also independent of $t$.

\subsubsection{Proof of Lemma \ref{conv-lem1}}By the these uniform estimates of $\td{u}^*_r$, we conclude
$$\lim\limits_{\tau\goto\infty}\td{u}^*_r(\xi,\tau)=u^{\infty*}_r(\xi)$$
uniformly in $B_r.$   It's cleat that
\[\td{u}^*_r(x, \tau)-\td{u}_r^*(x, 0)=\int_0^\tau\td{H}ds.\]
Then the uniform $C^0$ bound for $\td{u}^*_r$ implies $\int_0^\infty\td{H}ds<\infty.$ This yields
as $\tau\goto\infty,$ $\td{H}\goto 0.$
Therefore,  $u^{\infty*}_r$ satisfies
\[
\left\{
\begin{aligned}
F_*^{-\al}(w^*\ga^*_{ik}(u^{\infty*}_r)_{kl}\gamma_{lj}^*)w^*&=-u^{\infty*}_r\,\,&\mbox{in $B_r$}\\
u^{\infty*}_r&=u^*_0\,\,&\mbox{on $\p B_r$}.
\end{aligned}
\right.
\]\qed

\subsubsection{Proof of Lemma \ref{conv-lem2}}
We want to show that for any $K\subset\R^n,$ there exists $r_K>0,$ such that when $r>r_K,$ $\frac{1}{A(t)}u_r(A(t)x, t)$ is defined in $K$ for any $t>0.$
We denote  $\tilde{\Omega}(r,t)=D\tilde{u}^*_r(B_r,t)$ and $\hat{\Omega}(r)=D\hat{u}_r^*(B_r),$ where $\hat{u}_r^*$ is the solution of \eqref{conv1.3}. In the following, we only need to show if $r>r_K$ then $K\subset\td{\Omega}(r, t)$ for any $t>0.$ It's clear that
$$\tilde{u}_r|_{\p\tilde{\Omega}(r,t)}=(\xi\cdot D\td{u}^*_r-\td{u}^*_r)|_{\p B_r}\geq (\xi\cdot D\hat{u}_r^*-\hat{u}_r^*)|_{\p B_r}\geq\hat{u}_r|_{\p \hat\Omega(r)},$$
where $\hat{u}_r$ is the Legendre transform of $\hat{u}_r^*.$
Recall \cite{WX22-1} we know when $r\goto 1$, $\hat{u}_r |_{\p \hat\Omega(r)}\goto +\infty$, which yields as $r\goto 1,$ $\tilde{u}_r|_{\p\tilde{\Omega}(r,t)}\goto+\infty$.
By virtue of Subsubsection 4.1.1 we know $\td{u}_r>c$ for some constant $c$ independent of $r$ and $t.$ Since $|D\td u_r|<1$ , we get $\td u_r$ is uniformly bounded from above in $K.$ We conclude that for any compact set $K\subset\R^n,$ there exists $r_K>0$ such that when $r>r_K,$ $K\subset\td{\Omega}(r, t)$ for any $t>0.$

Combining the discussion above with estimates obtained in Section \ref{le}, it is easy to see that $u_r$ convergence to $u$ on any compact subset $K\times [a, b]\subset\R^n\times [0, \infty),$ $0\leq a<b.$ Therefore, we prove Lemma \ref{conv-lem2}. \qed

\section*{Appendix}
\subsection*{Proof of the equivalence of \eqref{Cond3} and \eqref{Cond}}

When $C>1,$ we consider
$\hat{u}_0(x)=\beta u_0\lt(\frac{x}{\beta}\rt),$ where $\beta$ is some undetermined  positive constant. A straightforward calculation yileds
\[F^\al(\ka[\M_{\hat{u}_0}])=\beta^{-\al}F^\al(\ka[\M_{u_0}]),\]
and
\[-\lt<X_{\hat{u}_0}, \nu_{\hat{u}_0}\rt>=\frac{-x\cdot Du_0\lt(\frac{x}{\beta}\rt)
+\beta u_0\lt(\frac{x}{\beta}\rt)}{\sqrt{1-|Du_0\lt(\frac{x}{\beta}\rt)|^2}}=-\beta\lt<X_{u_0}, \nu_{u_0}\rt>.\]
Therefore, we can see that $\hat{u}_0$ satisfies
\[F^\al(\ka[\M_{\hat{u}_0}])<C\beta^{-\al-1}(-\lt<X_{u_0}, \nu_{u_0}\rt>).\]
Choosing $\beta>0$ sufficiently large, then $\hat{u}_0$ satisfies \eqref{Cond}. It is easy to see that $\hat{u}_0$ is also spacelike, strictly convex, and
$$\hat{u}_0(x)-|x|\goto\beta\varphi\lt(\frac{x}{|x|}\rt),\text{ as }\ \ |x|\goto\infty.$$
Applying Theorem \ref{theo1}, we know there exists $\hat{u}(x, t)$ such that
\[
\left\{
\begin{aligned}
\hat{u}_t&=&F^\al w\,\,&\mbox{ in $\R^n\times(0, \infty),$}\\
\hat{u}(x, 0)&=&\hat{u}_0(x)\,\,&\mbox{ in }\R^n.
\end{aligned}
\right.
\]
Moreover, the rescaled flow $\lt(A(t)x, \frac{\hat{u}(A(t)x, t)}{A(t)}\rt)$ converges to a self-expander $\M_{\hat{u}^{\infty}}:=\{(x, \hat u^\infty(x))\mid x\in\R^n\}$ with
$$\hat{u}^{\infty}(x)-|x|\goto\beta\varphi\lt(\frac{x}{|x|}\rt),\text{ as }\ \ |x|\goto\infty.$$
Now, let $u(x, t)=\frac{1}{\beta}\hat{u}(\beta x, \beta^{\al+1}t).$ One can verify that $u$ satisfies
\[
\left\{
\begin{aligned}
u_t&=&F^\al w\,\,&\mbox{  in $\R^n\times(0, \infty),$}\\
u(x, 0)&=&u_0(x)\,\,&\mbox{  in } \mathbb{R}^n.
\end{aligned}
\right.
\]
Moreover, the rescaled flow $\lt(A(t)x, \frac{u(A(t)x, t)}{A(t)}\rt)$ converges to the self-expander $\M_{u^{\infty}}:=\{(x, u^\infty(x))\mid x\in\R^n\}$ with
$$u^{\infty}(x)-|x|\goto\varphi\lt(\frac{x}{|x|}\rt),\text{ as }\ \ |x|\goto\infty.$$
Here $u^\infty(x)=\frac{1}{\beta}\hat u^\infty(\beta x).$ \qed

\end{document}